\newcommand{\R}{\mathbb{R}}
\newcommand{\Z}{\mathbb{Z}}
\newcommand{\eps}{\varepsilon}
\newcommand{\fhi}{\varphi}
\newcommand{\weak}{\rightharpoonup}
\newcommand{\weakto}{\rightharpoonup}
\newcommand{\mean}{-\hspace{-1.15em}\int}
\newcommand{\del}{\partial}
\newcommand{\curl}{\mathrm{curl}}
\newcommand{\diam}{\mathrm{diam}}
\newcommand{\eff}{\mathrm{\tiny eff}}
\def\calM{\mathcal{M}}
\def\calH{\mathcal{H}}
\def\XXint#1#2#3{{\setbox0=\hbox{$#1{#2#3}{\int}$}
     \vcenter{\hbox{$#2#3$}}\kern-.5\wd0}}
\newtheorem{theorem}{Theorem}[section]
\newtheorem{definition}[theorem]{Definition}
\newtheorem{lemma}[theorem]{Lemma}
\newtheorem{proposition}[theorem]{Proposition}
\newtheorem{remark}[theorem]{Remark}
\numberwithin{equation}{section}
\title{\bf\Large Effective acoustic properties of a meta-material consisting of
  small Helmholtz resonators}
\author{A.\,Lamacz, B.\,Schweizer\thanks{Technische Universit\"at
    Dortmund, Fakult\"at f\"ur Mathematik, Vogelpothsweg 87, D-44227
    Dortmund, Germany.}}
\begin{document}

\maketitle

\begin{abstract}
  We investigate the acoustic properties of meta-materials that are
  inspired by sound-absorbing structures. We show that it is possible
  to construct meta-materials with frequency-dependent effective
  properties, with large and/or negative
  permittivities. Mathematically, we investigate solutions $u^\eps:
  \Omega_\eps \to \R$ to a Helmholtz equation in the limit $\eps\to 0$
  with the help of two-scale convergence. The domain $\Omega_\eps$ is
  obtained by removing from an open set $\Omega\subset \R^n$ in a
  periodic fashion a large number (order $\eps^{-n}$) of small
  resonators (order $\eps$). The special properties of the
  meta-material are obtained through sub-scale structures in the
  perforations.
\end{abstract}

\smallskip {\bf Keywords:} Helmholtz equation, homogenization,
resonance, perforated domain, frequency dependent effective properties

  \medskip
  {\bf MSC: 78M40, 35P25, 35J05} 

\pagestyle{myheadings} 
\thispagestyle{plain} 

\markboth{A.\,Lamacz, B.\,Schweizer}{Effective Helmholtz Equation in a
  geometry with small resonators}

\section{Introduction}

In this article, we are interested in the acoustic properties of a
particular meta-material, inspired by sound absorbing structures.  We
define a complex geometry, consisting of many small cavities, and
study the Helmholtz equation in this geometry. The acoustic properties
of the meta-material are determined by the Helmholtz equation since
the acoustic pressure $p$ of a time-harmonic sound wave of fixed
frequency $\omega$ is of the form $p(x,t) = u(x) e^{i\omega t}$, where
$u$ solves a Helmholtz equation.

In standard homogenization settings, nothing special can be expected
concerning the acoustic properties of a meta-material (e.g.\,large or
negative coefficients). Instead, in this contribution, we introduce a
setting where the small inclusions are resonators and where the
effective behavior of the meta-material introduces new features.

Let us describe these statements in a more mathematical language: We
consider a domain $\Omega_\eps\subset \R^n$, $n=2$ or $n=3$, which is
obtained by removing small obstacles of typical size $\eps>0$ from a
domain $\Omega\subset \R^n$. For a fixed frequency $\omega\in\R$, we
study solutions $u^\eps\in H^1(\Omega_\eps)$ to the Helmholtz equation
\begin{align}
  \begin{split}
    \label{eq:Helmholtzeps}
    -\Delta u^\eps&=\omega^2u^\eps\quad\,\text{ in }\Omega_\eps\,,\\
    \partial_n u^\eps&=0\quad\quad\ \ \text{ on }
    \partial\Omega_\eps\setminus\partial\Omega\,,\\
    u^\eps&=g\quad\quad\ \ \text{ on }\partial\Omega\,.
  \end{split}                        
\end{align}
The first boundary condition expresses that the obstacles are
sound-hard (homogeneous Neumann condition, $n$ denotes the exterior
normal at $\del\Omega_\eps$), the second boundary condition prescribes
a pressure at the external boundary, $g\in H^1(\Omega)$ is responsible
for the generation of a sound wave in the domain.

When $\Omega_\eps$ is obtained from $\Omega$ by a standard periodic
perforation procedure, then the homogenization of equation \eqref
{eq:Helmholtzeps} is well-established.  One finds an effective
coefficient $A_*$ and a volume correction factor $\lambda\in
(0,\infty)$ such that, for small $\eps>0$, the solution $u^\eps$ looks
essentially like the solution $u^*$ of the effective equation
$-\nabla\cdot (A_* \nabla u^*) = \omega^2 \lambda u^*$. In this
effective system, neither $A^*$ nor $\lambda$ are frequency dependent.

In contrast to such a standard approach we investigate \eqref
{eq:Helmholtzeps} for a domain $\Omega_\eps$, where every single
inclusion (perforation) has the shape of a small resonator. This is
possible by introducing a three-scale problem: The macro-scale is
$\diam(\Omega) = O(1)$, the micro-scale of the single inclusion is
$O(\eps)$, and the single inclusion contains a subscale feature of
either size $O(\eps^2)$ (in dimension $n=3$) or $O(\eps^3)$ (in
dimension $n=2$). In this three-scale domain $\Omega_\eps$, the
solutions $u^\eps$ exhibit a more interesting behavior. We perform the
homogenization procedure and find that, for small $\eps>0$, the
solution $u^\eps$ to \eqref {eq:Helmholtzeps} looks essentially like
the solution $v$ to the effective system
\begin{equation}
  \label{eq:effectivesystem-prolog}
  -\nabla\cdot\left(A_{*}\nabla v\right) 
  =\omega^2\Lambda\, v\quad\text{ in }\Omega\,.
\end{equation}
The form of this system is as in the standard homogenization setting,
two effective coefficients $A_{*}$ and $\Lambda\in \R$ modify the
original equation when describing the system with a macroscopic
equation on $\Omega$. But, due to the more complex geometry, we obtain a
parameter $\Lambda = \Lambda(\omega)$, which is frequency dependent.
It can change sign and it can be arbitrarily large due to a resonance
effect in the single cavity. The resonance frequency is given by the
well-known formula for Helmholtz resonators, $\omega_* =
\sqrt{A/(LV)}$, where the real numbers $A$, $L$, and $V$ characterize
the geometric properties of the resonators (area of a channel cross
section, length of the channel, volume of the resonator).

Usually, small inclusions correspond to a high resonance frequency,
and not to some finite frequency $\omega_*$. But, as was shown in
\cite{Schweizer-HelmRes}, a finite resonance frequency can be obtained
when a singular structure is included in the geometry: We consider a
setting where, in every periodicity cell $Y\subset \R^n$, a resonator
region $R_Y\subset Y$ is separated from an exterior region $Q_Y\subset
Y$ by the sound-hard obstacle $\Sigma_Y^\eps\subset Y$, cp. Figure
\ref {fig:manyhelm}. But the separation is not complete, the obstacle
leaves open a small channel that connects $R_Y$ and $Q_Y$. The scaling
of the channel depends on the dimension.  In two space dimensions ($n
= 2$), the relative scaling of the opening is $\eps^p$ with $p=2$,
hence the channel width of the single inclusion $\Sigma_k^\eps\subset
\Omega$ (where $k\in \Z^n$ is the index of the $k$-th inclusion) is of
order $\eps^3$. In dimension $n=3$ the exponent is $p=1$, the channel
opening diameter of the single inclusion $\Sigma_k^\eps\subset \Omega$
is therefore of order $\eps^2$. In both cases, the scaling is such
that the quantity $A_\eps/(L_\eps V_\eps)$ is of order $1$, where
$A_\eps$ is the channel opening, $L_\eps$ is the channel length and
$V_\eps$ is the volume. Let us check this condition: For $n=2$ we have
$A_\eps/(L_\eps V_\eps) \sim \eps^{p+1} / (\eps^1 \eps^n) = 1$, for
$n=3$ we have $A_\eps/(L_\eps V_\eps) \sim (\eps^{p+1})^2 / (\eps^1
\eps^n) = 1$.

\subsection{Main result}

We investigate a large domain $\Omega\subset \R^n$ that contains
meta-material in some region $D\subset \Omega$. The single small
resonator is denoted as $\Sigma_k^\eps$, with $k\in \Z^n$ such that
$\eps\left(k+Y\right)\subset D$, where $Y:=(-\frac12,\frac12)^n$. 
The union of all resonators $\Sigma_\eps = \bigcup_k
\Sigma_k^\eps\subset D$ defines the perforated domain $\Omega_\eps :=
\Omega\setminus \Sigma_\eps$. In order to analyze the effect of the
resonator region $D$, we study solutions $u^\eps$ to the Helmholtz
equation \eqref {eq:Helmholtzeps} and investigate their behavior
inside and outside of $D$ in the limit $\eps\to 0$.

\begin{figure}[th]
   \centering
   \includegraphics[height=65mm]{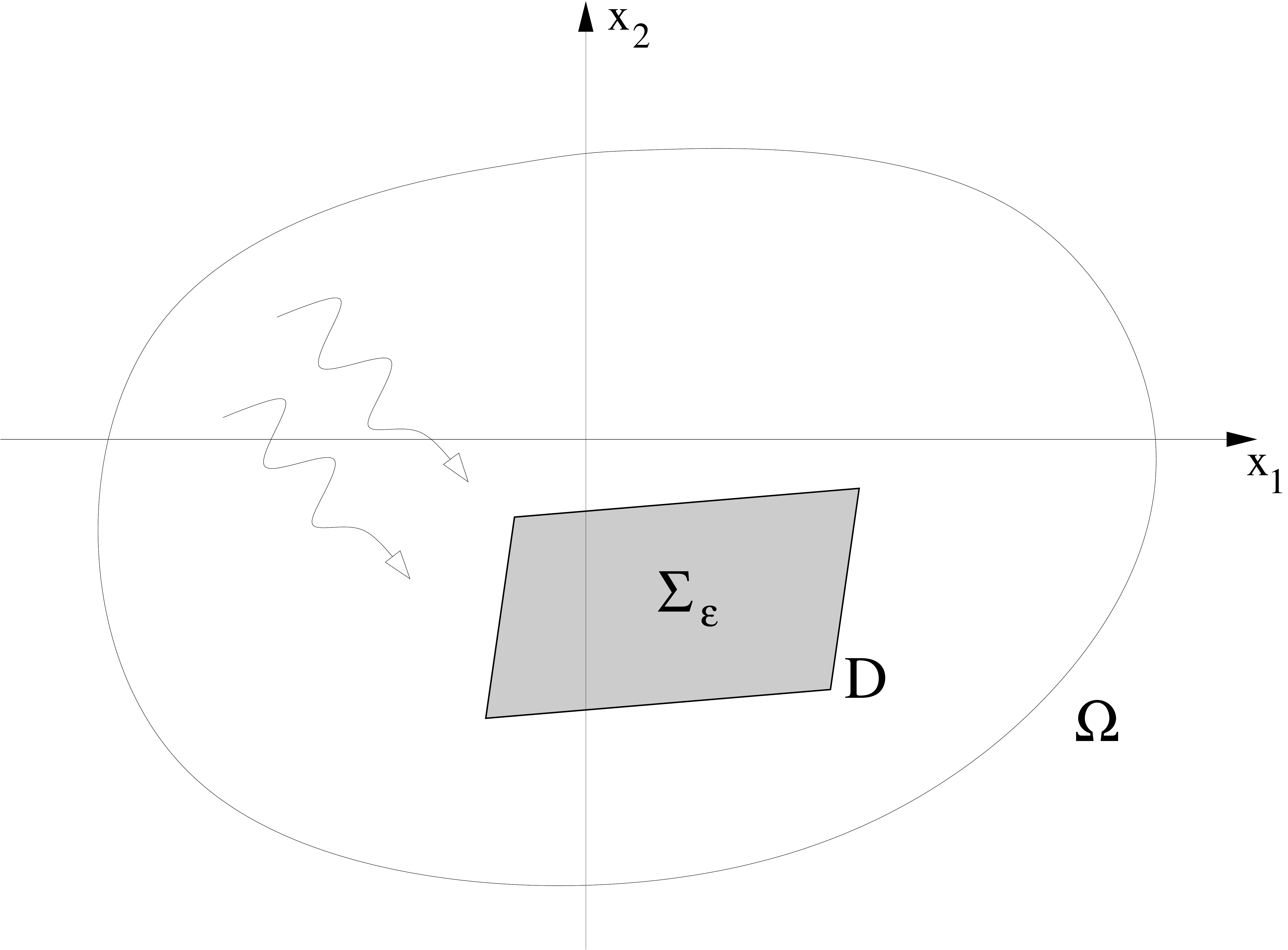}\hspace*{8mm}
   \includegraphics[height=38mm]{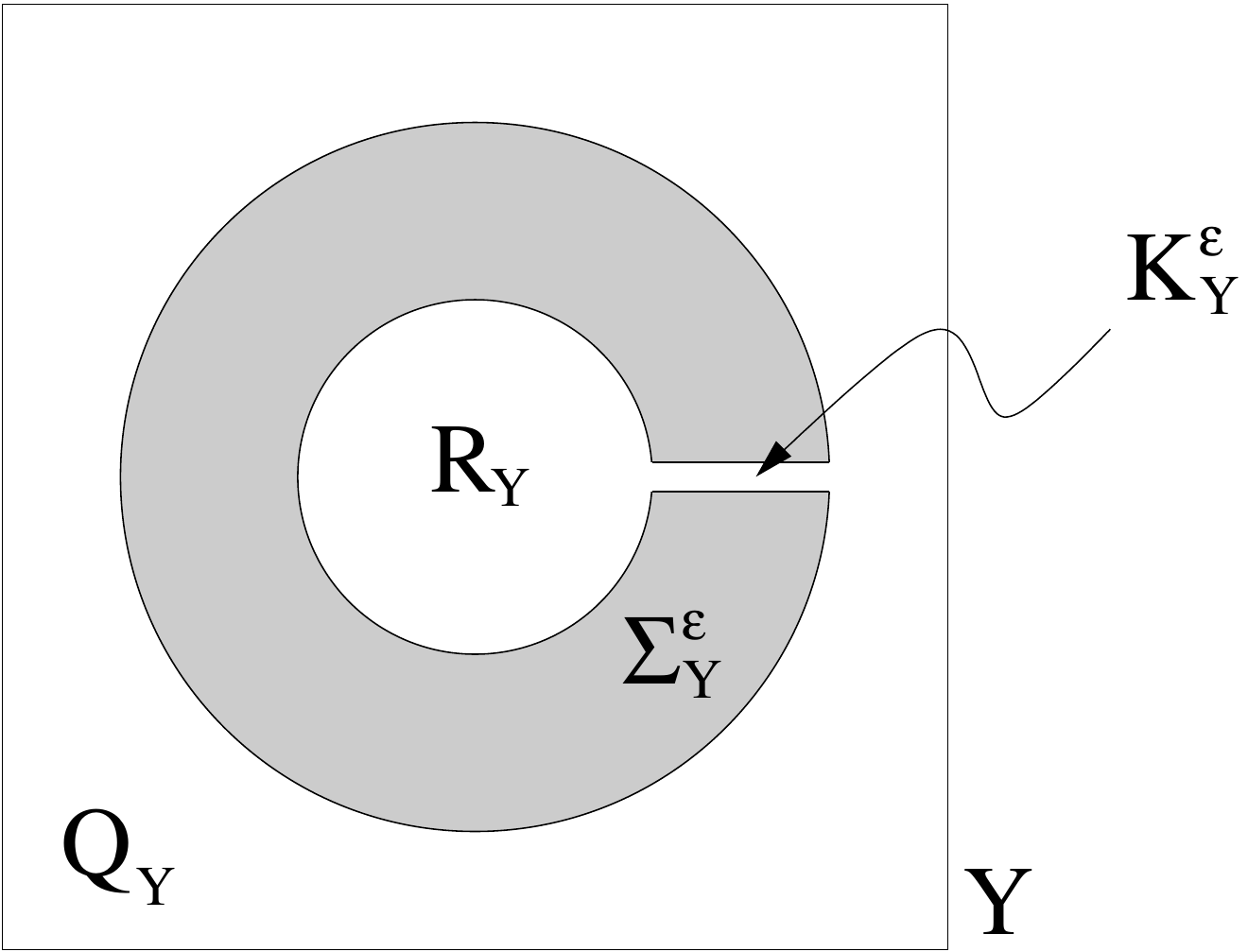}
   \caption{\em Sketch of the scattering problem. 
     Left: The sub-region $D\subset \Omega$ contains the small 
     Helmholtz resonators, given by $\Sigma_\eps\subset D$. The number 
     of resonators in the region $D$ is of order $\eps^{-n}$. 
     We are interested in the effective properties of the meta-material in $D$.
     Right: The microscopic geometry with the single resonator $R_Y$. 
     The channel width inside $Y$ is of the order $\eps^p$.
     \label{fig:manyhelm}}
\end{figure}

We derive an effective Helmholtz equation with the tool of two-scale
convergence. Essentially, the effective system is given by \eqref
{eq:effectivesystem-prolog}.  In this equation, the effective
permittivity $\Lambda:\Omega\to \R$ is $\Lambda(x)=1$ for
$x\in\Omega\setminus D$ (outside the region that contains the
resonators) and $\Lambda(x)=\Lambda_{\eff}$ for $x\in D$,
where the real number
\begin{align}
  \label{eq:Lambda-1}
  \Lambda_{\eff} := Q - \frac{A}{L} 
  \left(\omega^2-\frac{A}{LV}\right)^{-1}
\end{align}
is determined by the positive real numbers $A$, $L$, $V$ and $Q$ which
characterize the geometric properties of the resonators (area of a
channel cross section, length of the channel, volume of the resonator,
volume of the exterior), cf. Section \ref{sec.geometry} below. The
number $\Lambda_{\eff}$ represents the permittivity of the effective
medium. Due to resonance properties of $\Sigma_Y^\eps$, it can be
negative and it can be large in absolute value (with both signs). The
resonance frequency $\omega_* = \sqrt{A/(LV)}$ is determined by the
geometry.

The ellipticity matrix is $A_{*}(x) = \text{{\bf 1}}_{\R^n}$ for
$x\in\Omega\setminus D$, whereas for $x\in D$ it is given as a cell
problem integral:
\begin{align}
  \label{eq:effectiveA}
  \left( A_{*}\right)_{ij}(x) :=
  \left( A_{\eff}\right)_{ij} &:=
  \int_{Q_Y}\left[\delta_{ij}+\partial_{y_i}\chi_j(y)\right]\,dy\quad\text{
    for }i,j\in\{1,...,n\}\,,
\end{align}
where $\delta_{ij}$ denotes the Kronecker Delta and $\chi_j$ is the
solution to the cell problem \eqref{eq:cellproblemw}. The set
$Q_Y\subset Y$ is the part of the periodicity cell $Y$ that is
exterior to the obstacle and $Q:=|Q_Y|$ is its volume.

Let us formulate here our main result in a condensed form. Theorem
\ref {thm:main-1} characterizes the effective influence of the
scattering region $D$ on waves in $\Omega$. Theorem \ref {thm:main-1}
is a consequence of the stronger result in Theorem \ref {thm:main-2},
which includes a characterization of $u^\eps$ in $D$.

\begin{theorem}[Effective Helmholtz equation]
  \label{thm:main-1}
  Let $\Omega\subset \R^n$ be a domain and let $D\subset \Omega$
  contain the obstacle set $\Sigma_\eps\subset D$ as described in
  Section \ref {sec.geometry}. Let $u^\eps\in H^1(\Omega_\eps)$ be a
  sequence of solutions to \eqref{eq:Helmholtzeps} satisfying
  \begin{align}
    \label{eq:uniformboundassume}
    \|u^\eps\|_{L^2(\Omega_\eps)}\leq C\,.
  \end{align}
  Let $\Lambda_{\eff}$ and $\Lambda$ be defined by the
  algebraic relation \eqref {eq:Lambda-1}, which uses the geometric
  constants $A, L, V, Q>0$. Let $A_{*}$ be defined by
  \eqref {eq:effectiveA} with the help of
  cell problems. Let $v\in H^1(\Omega)$ be a solution to the effective
  Helmholtz equation
  \begin{align}
    \label{eq:effectiveHelmholtz-1}
    -\nabla\cdot\left(A_{*}\nabla v\right) 
    &=\omega^2\Lambda\, v\quad\text{ in }\Omega\,,\\
    \label{eq:effectiveHelmholtz-2}
    v&=g\quad\quad\quad\text{ on }\partial\Omega\,.
  \end{align}
  If the solution $v$ to the effective system is unique, then there
  holds 
  \begin{equation}
    u^\eps |_{\Omega\setminus D} \to v |_{\Omega\setminus D}\quad \text{ in }
    L^2(\Omega\setminus D)\,.
  \end{equation}
\end{theorem}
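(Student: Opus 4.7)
The plan is to proceed by two-scale convergence, treating separately the exterior complement $\Omega\setminus D$, the bulk of each periodicity cell, and the thin connecting channel. First I would establish a uniform $H^1$ bound: testing \eqref{eq:Helmholtzeps} with $u^\eps - G$ for a suitable lifting $G\in H^1(\Omega)$ of $g$, integrating by parts, and exploiting the Neumann condition on $\partial\Sigma_\eps$, one obtains $\|\nabla u^\eps\|_{L^2(\Omega_\eps)}\leq C$ in addition to \eqref{eq:uniformboundassume}. After extending $u^\eps$ from $Q_Y^\eps$ across the smooth obstacle boundary by a uniformly bounded extension operator (available because the extension need not cross the thin channel), a two-scale compactness argument produces, along a subsequence, a limit $U_0(x,y)\in L^2(\Omega;H^1_{\per}(Y))$ and a corrector $U_1(x,y)\in L^2(\Omega; H^1_{\per}(Q_Y)/\R)$ such that $U_0$ restricted to $Q_Y$ is $y$-independent and equals a macroscopic function $v\in H^1(\Omega)$.

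Next I would analyze the resonator interiors. Because the $H^1$-energy inside each $R_k^\eps$ vanishes in the two-scale sense, the rescaled restrictions of $u^\eps$ to $R_k^\eps$ converge to a $y$-independent value, defining a measurable function $\alpha:D\to\C$ with $U_0(x,y)=\alpha(x)$ for $y\in R_Y$. The coupling between $\alpha$ and $v$ is obtained via a thin-channel asymptotic analysis: since the channel is long (length of order $\eps L$) and narrow (cross section scaled by $\eps^{p+1}$), the harmonic part of $u^\eps$ in the channel is essentially one-dimensional and affine, joining $\alpha(x)$ at the $R_Y$-end to $v(x)$ at the $Q_Y$-end. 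Computing the resulting flux $A(v(x)-\alpha(x))/L$ through the channel and identifying it with $\int_{R_Y}\Delta u^\eps = -\omega^2 V\alpha(x)$, obtained by integrating the Helmholtz equation over $R_Y$, yields the algebraic resonator relation
\begin{equation*}
  \frac{A}{L}\bigl(\alpha(x)-v(x)\bigr)=\omega^2 V\alpha(x),
\end{equation*}
so that $\alpha(x)=\omega_*^2(\omega_*^2-\omega^2)^{-1}v(x)$ whenever $\omega\neq\omega_*=\sqrt{A/(LV)}$.

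To identify the effective PDE, I would test \eqref{eq:Helmholtzeps} against oscillating test functions $\fhi_\eps(x)=\fhi(x)+\eps\sum_j\partial_j\fhi(x)\,\chi_j(x/\eps)$ with $\fhi\in C_c^\infty(\Omega)$ and $\chi_j$ the cell-problem solution defining \eqref{eq:effectiveA}. In the gradient term, the two-scale limit produces $\int_{\Omega\setminus D}\nabla v\cdot\nabla\fhi + \int_D A_{*}\nabla v\cdot\nabla\fhi$, while the right-hand side $\omega^2\int_{\Omega_\eps}u^\eps\fhi_\eps$ two-scale converges to
\begin{equation*}
  \omega^2\int_{\Omega\setminus D} v\,\fhi\,dx + \omega^2\int_D\bigl(Q\,v(x)+V\,\alpha(x)\bigr)\fhi(x)\,dx.
\end{equation*}
Substituting the relation for $\alpha$ and using $\omega_*^2=A/(LV)$ shows that the mass-density factor inside $D$ collapses exactly to $\Lambda_{\eff}$ from \eqref{eq:Lambda-1}, so $v$ solves \eqref{eq:effectiveHelmholtz-1}–\eqref{eq:effectiveHelmholtz-2} with boundary data inherited from the trace $u^\eps|_{\partial\Omega}=g$. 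Assuming uniqueness of $v$, the whole sequence converges, and strong $L^2(\Omega\setminus D)$ convergence follows since $\Omega\setminus D$ carries no perforations: Rellich compactness applies to the $H^1$-bounded family $u^\eps|_{\Omega\setminus D}$, and the two-scale limit identification pins down the limit.

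The main obstacle I expect is the rigorous handling of the $\eps^{p+1}$-scale channel: standard Poincaré and extension estimates degenerate there, and one must carefully justify both the one-dimensional affine profile in the channel and the decomposition of the boundary flux into the clean expression $A(v-\alpha)/L$ modulo errors that vanish in the two-scale limit. A related technical point is constructing, for each $\eps>0$, extension operators from $Q_Y^\eps$ with $\eps$-uniform $H^1$-bounds that do \emph{not} traverse the channel, so that $v$ and $\alpha$ may legitimately appear as distinct pieces of $U_0$, connected only through the resonance relation rather than through a spurious continuity constraint.
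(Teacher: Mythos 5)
Your architecture matches the paper's: uniform $H^1$ bound, two-scale limits identifying a constant-in-$y$ exterior value $v$ on $Q_Y$ and an interior value ($\alpha$ in your notation, $w$ in the paper's) on $R_Y$, a flux balance through the channel yielding the algebraic resonance relation, and an oscillating-test-function argument producing $A_*$ and the mass density $Qv+V\alpha$, which collapses to $\Lambda_{\eff}v$ exactly as you compute. The algebra of your resonance relation and the final identification of $\Lambda_{\eff}$ are correct, and your closing argument (uniqueness of $v$, Rellich outside $D$) is the same as the paper's.

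The genuine gap is the step you yourself flag as ``the main obstacle'': justifying that the channel flux equals $A(v-\alpha)/L$ up to vanishing errors. The difficulty is not really the affine profile inside the channel --- the paper never proves $u^\eps$ is affine there; it only uses an affine \emph{test function}, so that the averaged flux $-\frac{1}{L\eps}\int_{K_\eps}\partial_{x_1}u^\eps$ reduces, after integration by parts, to the difference of boundary traces of $u^\eps$ on the two channel mouths $\Gamma^{\eps,R}$ and $\Gamma^{\eps,Q}$. The real issue is showing that these traces, averaged over interfaces whose measure shrinks like $\eps^{p(n-1)}$ inside the unit cell, converge to the bulk two-scale limits $\alpha$ and $v$. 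Weak $L^2$ (or even weak $H^1$) convergence gives no control on traces over a degenerating set, so this cannot be dispatched by a trace theorem. The paper's resolution (Lemma A.1) requires the extra geometric hypothesis that $\partial R_Y$ and $\partial Q_Y$ are flat near the channel ends, so that $\partial_1 U_\eps$ satisfies a mixed Dirichlet--Neumann problem with homogeneous data; this yields local $H^2$ bounds near the mouths, and the embedding $H^2\hookrightarrow C^0$ (valid for $n\le 3$) upgrades the convergence to uniform convergence, which survives averaging over the shrinking interface. Without this lemma --- or an equivalent capacity/regularity argument --- the coupling relation, and hence $\Lambda_{\eff}$, is not established; everything downstream of it in your proposal is conditional on it.
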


\begin{remark}
 For almost every $\omega\in\R$ the solution to  \eqref{eq:effectiveHelmholtz-1}-\eqref{eq:effectiveHelmholtz-2} 
is unique, since the generalized Eigenvalue-problem \eqref{eq:effectiveHelmholtz-1} has a discrete set of Eigenvalues $\omega^2$.  
\end{remark}

The result of Theorem \ref{thm:main-1} yields that the field $u^\eps$ is determined outside the resonator
region $D$ by the effective system
\eqref{eq:effectiveHelmholtz-1}--\eqref{eq:effectiveHelmholtz-2}.
Inside $D$, the solutions $u^\eps$ are oscillatory due to the
micro-structure. We describe the two-scale limit of $u^\eps$ in $D$ in
Subsection \ref {ssec.twoscale-structure}.

\subsection{Literature}

Homogenization theory is concerned with the derivation of effective
equations. The most elementary task is to consider a sequence $u^\eps$
of solutions to a family of partial differential equations that
contains a small parameter $\eps$, very often the periodicity of the
geometry or the periodicity of a coefficient. The goal is to find an
equation that characterizes the weak limit $u^*$ of the sequence
$u^\eps$. Our result is of that form. The general theory has origins
in \cite {Sanchez-Palencia} and it was greatly simplified by
introducing the notion of two-scale convergence \cite {Allaire1992,
  Nguetseng1989}. The theory was later extended in several directions,
e.g.\,to stochastic problems \cite {ZhikovMR1329546}, to problems with
multiple scales \cite {AllaireBriane}, and to problems with measure
valued limits \cite{BouchitteFelbacq-2000, CherednichenkoSZ}.

{\bf Perforated media.} Our result treats perforated media in the
sense that an equation with constant coefficients is considered on a
domain $\Omega_\eps$ which is obtained by removing many small
subdomains from a macroscopic domain $\Omega$. A simple boundary
condition is imposed on the boundary $\del\Omega_\eps\setminus
\del\Omega$. Perforated media have been studied with Dirichlet
boundary conditions in \cite{CioranescuMurat-strange} and with Neumann
boundary conditions in \cite {CioranescuPaulin1979}, the scattering
problem has been analyzed in \cite {CodegoneMR2126242}. For
perforations along a lower dimensional manifold see e.g.\,\cite
{Doerlemann-Heida-Schweizer}. Periodically perforated domains can
also be treated with the tool of two-scale convergence, which shortens
the proofs of some of the original results.  The difference between
our result and those mentioned above is that we introduce a sub-scale
structure in the perforation to allow for resonances. This leads to
more interesting terms in the effective equation.

{\bf Resonances in Maxwell's equations.} Resonances in the single
periodicity cell can create interesting effects in the homogenization limit. 
One of the fascinating examples is the construction of negative index
materials for light as suggested in \cite {BrienPendry2002,
  Pendry2000} with its possible applications to cloaking (see \cite
{Kohn-LSW} and the references therein). The meta-material for the
Maxwell's equations has qualitatively new features, namely a negative
effective permittivity and a negative effective permeability. 
The astonishing properties of such a material had been anticipated in
\cite {Veselago1968}, but the mathematical analysis of negative index 
meta-materials is much younger. Model equations have been studied in
\cite {KohnShipman}, non-rigorous results appeared in \cite
{ChenLipton2010, GuenneauZollaNicolet}. The mathematical theory that
confirmed {\em negative effective permeability} appeared in \cite
{BouchitteSchweizer-Max, Lamacz-Schweizer-Max} for metals and in \cite
{BouchitteBourel2009} for dielectrics. The full result on the {\em
  negative effective index} meta-material can be found in \cite
{Lamacz-Schweizer-Neg}.

{\bf Generation of resonances.} At first sight, a resonance effect in
a structure of order $\eps$ seems impossible when the frequency
$\omega$ is kept fixed: One expects that an object of order $\eps$ has
a resonance at wave-length of order $\eps$ and hence a resonance
frequency of order $\eps^{-1}$.  Indeed, in order to obtain
interesting features in the limit $\eps\to 0$, one has to introduce
some singular behavior of the micro-structure.

{\em (a) Large contrast.}  The simplest setting uses a large contrast,
e.g.\,a parameter $a_\eps$ that is of order $1$ outside the structure
and $\eps^2$ in the structure \cite {BouchitteBourel2009,
  BouchitteSchweizer-Plasmons, ChenLipton2010,
  Lamacz-Schweizer-Neg}. In many cases, the large contrast must be
combined with one of the features (b)-(d) below.

{\em (b) Fibers.}  Interesting effects within the framework of
homogenization can be obtained when fibers are present in the
structure \cite {BellieudBouchitte1998, BellieudGruias2005,
  BellieudBouchitte2002, ChenLipton2010}. On the one hand a fiber can be regarded as
a classical periodic micro-structure ($\eps$-periodic in each direction). 
On the other hand, the exterior of the
fiber (in the single cell) is not a simply connected set, which makes
some standard two-scale limit characterizations impossible. Other
effects are related to the fact that the length of a single fiber is of order $O(1)$. 
For implications in Maxwell equations see e.g.\,\cite
{BouchitteFelbacq2004, BouchitteFelbacq2006, ChenLipton2010,
  Lamacz-Schweizer-Neg}.

{\em (c) Split rings.} More complicated is the three-dimensional
construction of a split ring: For $\eps>0$, the ring is simply
connected, but the slit closes in the limit $\eps\to 0$ such that the
limiting object is no longer simply connected. This change of topology
is exploited in \cite {BouchitteSchweizer-Max, Lamacz-Schweizer-Max}.

{\em (d) Disconnected subregions.}  In the work at hand we use a
different construction, based on the Helmholtz resonator that has been
studied mathematically in \cite {Schweizer-HelmRes}. Again, a
topological effect plays a role: Every cell has an interior part $R_Y$
(the resonator) and an exterior part $Q_Y$. For every $\eps>0$, the
two subdomains are connected by a thin channel, but since the channel
gets thinner and thinner, the limiting object for $\eps\to 0$ is no
longer connected, it has the two disconnected components $R_Y$ and
$Q_Y$.

In both settings (c) and (d) it is the change of the topology that
makes the resonance in the small object at finite frequency
possible. We note that the analysis of spectral properties in
perforated domains require other methods if the low- and
high-frequency parts of the spectrum are studied separately, see \cite
{Allaire-Conca-1998, ZhikovMR1329546, Zhikov-spectral}.


\subsection{Geometry}
\label{sec.geometry}

Let $\Omega\subset\R^n$ be open and bounded and let $D\subset \Omega$
be an open set with Lipschitz boundary such that $\bar D\subset
\Omega$. The set $D$ contains the periodic perforations, it is the
scatterer in the effective equations.

\smallskip {\bf Microscopic geometry.} We start the construction from
the periodicity cube $Y:=(-\frac12,\frac12)^n$. Since we will always
impose periodicity conditions on the cube $Y$, we may identify it with
the torus $\mathbb{T}^{n}$. We assume that $Y$ is the disjoint union
\begin{equation*}
  Y = R_Y \cup \bar\Sigma_Y \cup Q_Y\,, 
\end{equation*}
each of the three sets $R_Y, \Sigma_Y, Q_Y$ is open and connected with
Lipschitz boundary, $\bar R_Y, \bar \Sigma_Y\subset
(-\frac12,\frac12)^n$ do not touch the boundary, and $R_Y\cup Q_Y$ is
{\em not} connected, see Figure \ref{fig:manyhelm}.

{\it Channel construction.} The channel is constructed starting from a
one-dimensional line-segment $\Gamma_Y\subset\Sigma_Y$ that connects
$\del R_Y$ with $\del Q_Y$.  We may write the segment with its
tangential vector $\tau_{\Gamma_Y}\in \R^n$ as $\Gamma_Y = \{\gamma_0
+ t\tau_{\Gamma_Y} : t \in (0, L) \}$ with $L$ denoting the length of
the segment. For ease of notation we assume in the following that the
line segment has the tangential vector $\tau_{\Gamma_Y} = e_1$ and
that $\Gamma_Y \subset \R\times \{ 0\} \subset \R^n$. In this case, we
have $\gamma_0 = (y_R,0)$ and $\Gamma_Y = (y_R,y_Q)\times \{0\}$ with
$y_Q := y_R + L < 1/2$.

For technical reasons in the study of the asymptotic behavior of cell
solutions, we assume that the boundaries $\del R_Y$ and $\del Q_Y$ are
flat in the vicinity of $(y_R,0)$ and $(y_Q,0)$: For some $\delta>0$
there holds $\del R_Y\cap B_\delta((y_R,0)) \subset \{y_R\}\times
\R^{n-1}$ and similarly for $\del Q_Y$.

We now construct the channel as $K_Y^\eps :=
B_{\alpha\eps^p}(\Gamma_Y) \cap \Sigma_Y$, where $\alpha>0$ is fixed and $B_{\alpha\eps^p}$
denotes the generalized ball around a set. In our simplified setting,
the channel is the cylinder $K_Y^\eps = (y_R,y_Q)\times
B^{n-1}_{\alpha\eps^p}(0) \subset Y$, where
$B^{n-1}_{\alpha\eps^p}(0)$ is the $n-1$-dimensional ball with radius
${\alpha\eps^p}$.  The Helmholtz resonator $\Sigma_Y^\eps$ is defined
as
\begin{align}
  \Sigma_Y^\eps := \Sigma_Y\setminus \bar K_Y^\eps =
  Y \setminus \left(\bar R_Y\cup \bar K_Y^\eps\cup \bar Q_Y\right).
\end{align}
In the following, three geometric quantities will be crucial: (i) The
length $L$ of the channel. (ii) The relative cross section area $A$ of
the channel, $A = 2\alpha$ for $n=2$ and $A = \alpha^2 \pi$ for $n=3$.
(iii) The volume $V=|R_Y|$ of the inner connected component $R_Y$. We
will see that in the effective equation resonances appear at
frequencies $\omega^2$ that are close to the ratio $A/(LV)$.

\smallskip {\bf Macroscopic geometry.} In order to define the domain
$\Omega_\eps$, we use indices $k\in\Z^n$ and shifted small cubes
$Y_k^\eps:=\eps(k+Y)$.  We denote by $\mathcal{K}:=\{k\in
\Z^n\,|\,Y_k^\eps\subset D\}$ the set of indices $k$ such that the
small cube $Y_k^\eps$ is contained in $D$. Here and in the following,
in summations or unions over $k$, the index $k$ takes all values in
the index set $\mathcal{K}$.  The number of relevant indices is of the
order $|\mathcal{K}|=O(\eps^{-n})$.

Using the local subset $\Sigma_Y^\eps \subset Y$ we define the union
$\Sigma_\eps$ of scaled obstacles and the perforated domain
$\Omega_\eps$ by
\begin{align}
  \Sigma_\eps:=\bigcup_{k\in\mathcal{K}}\Sigma_k^\eps:=\bigcup_{k\in\mathcal{K}}\eps(k+\Sigma^\eps_Y)\,,
  \quad\quad
  \Omega_\eps:= \Omega\setminus\bar\Sigma_\eps\,.
\end{align}
Also the other microscopic quantities have their counterpart in the
macroscopic domain: The union $K_\eps$ of the channels, the union
$Q_\eps$ of exterior components and the union $R_\eps$ of interior
components are
\begin{align}
  K_\eps:=\bigcup_{k\in\mathcal{K}}\eps(k+K_Y^\eps)\,,\quad
  Q_\eps:=\bigcup_{k\in\mathcal{K}}\eps(k+Q_Y)\,,\quad
  R_\eps:=\bigcup_{k\in\mathcal{K}}\eps(k+R_Y)\,.
\end{align}

\subsection{Characterization of solutions in $D$}
\label{ssec.twoscale-structure}

Outside the scatterer region $D$, the function $v$ of the effective
system \eqref {eq:effectiveHelmholtz-1}--\eqref
{eq:effectiveHelmholtz-2} is the weak limit of $u^\eps$. In the
following, we want to describe the meaning of $v$ in the scatterer
region $D$.

If we denote by $\tilde{u^\eps}\in L^2(\Omega)$ the trivial extension
of $u^\eps$ by zero, then the uniform bound \eqref
{eq:uniformboundassume} implies the existence of a subsequence and of
a two-scale limit $u_0=u_0(x,y)$ such that $\tilde{u^\eps}
\stackrel{2s}{\weak} u_0$ (for the definition of two-scale convergence
we refer to \cite{Allaire1992}).  Proposition \ref{prop:charactu0}
below yields that the two-scale limit for $x\in D$ is of the form
\begin{align}
  \begin{split}
    \label{eq:charactv}
    u_0(x,y)=
    \begin{cases}
        v(x)& \;\;\text{ for } y\in Q_Y, \\
        w(x)& \;\; \text{ for } y\in R_Y, \\
        0& \;\; \text{ for } y\in \Sigma_Y.
    \end{cases}
  \end{split}
\end{align}
This characterization clarifies the meaning of $v$.  Outside the
perforated region $D\subset\Omega$, there is no micro-structure and
the sequence $u^\eps$ converges strongly to $v$.  Instead, for $x\in
D$, the function $v$ describes the value of $u_0$ in the exterior
$Q_Y$ of the Helmholtz resonator. Outside $D$, the limit function $v$
is comparable to $u^\eps$, while inside $D$ the function $v$ stands
for the values of $u^\eps$ outside the scatterers. This fits with the
result $v\in H^1(\Omega)$ from Lemma \ref{lem:twoscalegradients} below: 
It is the function $v$ that has a weak
continuity property across the boundaries of $D$.

\section{Two-scale limits}

We will always work with the assumption that the sequence $u^\eps$ is
uniformly bounded in $L^2(\Omega_\eps)$ as demanded in \eqref
{eq:uniformboundassume}. We remark that \eqref {eq:uniformboundassume}
implies also a uniform $H^1$-bound,
\begin{align}
  \label{eq:uniformboundimplied}
  \|u^\eps\|_{H^1(\Omega_\eps)}\leq C
\end{align}
for some $\eps$-independent constant $C>0$.  Since the boundary values are given
by $g\in H^1(\Omega)$ and since the domain $\Omega$ is bounded, the
estimate \eqref {eq:uniformboundimplied} follows immediately from
\eqref {eq:uniformboundassume} by testing \eqref {eq:Helmholtzeps}
with the solution $u^\eps$.

In this section we derive Relation \eqref{eq:charactv} for the
two-scale limit $u_0$. Moreover, we provide a characterization for the
two-scale limit of the gradients $\nabla u^\eps$.

\subsection{The two-scale limit $u_0$}
The following proposition provides a first characterization of $u_0$.
We use the sequence $\tilde{u^\eps}\in L^2(\Omega)$ of trivial
extensions of $u^\eps$ (obtained by setting $\tilde{u^\eps}(x) := 0$
for $x\in \Sigma_\eps$).

\begin{proposition}[The two-scale limit $u_0$]
  \label{prop:charactu0}
  Let $u^\eps\in H^1(\Omega_\eps)$ be a sequence of solutions to
  \eqref{eq:Helmholtzeps}.  We assume that the sequence satisfies the
  uniform bound \eqref{eq:uniformboundassume} and that
  $\tilde{u^\eps}\in L^2(\Omega)$ converges in two scales to some
  limit function $u_0\in L^2(\Omega\times Y)$. Then there exist $v\in
  L^2(\Omega)$ and $w\in L^2(D)$ such that
  \begin{enumerate}
  \item For $x\in\Omega\setminus D$ one has $u_0(x,y)=v(x)$.
  \item For $x\in D$ one has
    \begin{align}
      \begin{split}
        \label{eq:charactvnew}
	u_0(x,y)=
        \begin{cases}
            v(x)& \;\;\text{ for } y\in Q_Y, \\
            w(x)& \;\; \text{ for } y\in R_Y, \\
            0& \;\; \text{ for } y\in \Sigma_Y.
        \end{cases}
      \end{split}
    \end{align}
  \end{enumerate} 
\end{proposition}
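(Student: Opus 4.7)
My plan is to handle the three pieces of the claimed structure separately, with the matching of the $Q_Y$-value inside $D$ and the value on $\Omega\setminus D$ being the genuinely delicate step. That $u_0\equiv 0$ on $D\times\Sigma_Y$ is immediate from the definition of the trivial extension: any two-scale test function $\Phi\in C_c^\infty(D;C^\infty_{\per}(Y))$ with $y$-support compactly contained in $\Sigma_Y$ satisfies $\Phi(x,x/\eps)\neq 0$ only for $x\in\Sigma_\eps$, where $\tilde u^\eps$ vanishes.

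To establish the $y$-independence of $u_0$ on the non-perforated components I would apply the standard divergence trick. Fix $\Phi\in C_c^\infty(D;C^\infty_{\per}(Y;\R^n))$ whose $y$-support is compactly contained in $R_Y$ (respectively $Q_Y$); then $\Phi(x,x/\eps)$ is supported in $R_\eps$ (respectively $Q_\eps$), where $\tilde u^\eps=u^\eps\in H^1$, and one uses
\[
(\div_y\Phi)(x,x/\eps)=\eps\,\div_x\!\bigl[\Phi(\cdot,\cdot/\eps)\bigr](x)-\eps\,(\div_x\Phi)(x,x/\eps).
\]
Integration by parts on each cell produces no boundary contribution because $\Phi$ vanishes near the relevant parts of $\partial R_\eps$ (respectively $\partial Q_\eps$) and cell-cell boundaries cancel by periodicity; the remaining two terms are $O(\eps)$ by \eqref{eq:uniformboundassume} and the resulting $H^1$-bound \eqref{eq:uniformboundimplied}. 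Passing to the two-scale limit yields $\int\!\int u_0\,\div_y\Phi\,dy\,dx=0$, so $\nabla_y u_0=0$ on $D\times R_Y$, giving $u_0(x,y)=w(x)$ for some $w\in L^2(D)$; the $Q_Y$-case is identical and produces $u_0(x,y)=\tilde v(x)$ on $D\times Q_Y$; and the same argument with $\phi\in C_c^\infty(\Omega\setminus\bar D)$ and arbitrary $\psi\in C^\infty_{\per}(Y)$ (the region $\Omega\setminus D$ contains no perforations for $\eps$ small) yields $u_0(x,y)=v_0(x)$, independent of $y$, for $x\in\Omega\setminus D$.

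The remaining delicate step is to identify $\tilde v$ and $v_0$ as restrictions of a single $v\in L^2(\Omega)$. For this I would use a Cioranescu--Saint Jean Paulin type extension applied to the enlarged perforation $\bigcup_{k\in\mathcal{K}}\eps(k+(\bar R_Y\cup\bar\Sigma_Y))$, whose $\eps$-independent ``material'' cell component is the connected set $Q_Y$. This provides $P_\eps u^\eps\in H^1(\Omega)$ with $\|P_\eps u^\eps\|_{H^1(\Omega)}\le C$, and along a subsequence $P_\eps u^\eps\to v$ strongly in $L^2(\Omega)$ for some $v\in H^1(\Omega)$. On $\Omega\setminus D$ one has $P_\eps u^\eps=u^\eps=\tilde u^\eps$ for small $\eps$, so $v_0=v|_{\Omega\setminus D}$. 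To match on $D$ I would test with $\Phi(x,y)=\phi(x)\psi(y)$, $\phi\in C_c^\infty(D)$, $\supp\psi\subset Q_Y$: the support of $\Phi(\cdot,\cdot/\eps)$ lies in $Q_\eps$, where $\tilde u^\eps=P_\eps u^\eps$, and combining the strong $L^2$-convergence of $P_\eps u^\eps$ with the weak convergence of $\Phi(\cdot,\cdot/\eps)$ to $\phi(x)\int_{Q_Y}\psi(y)\,dy$ gives
\[
\int_D\!\int_{Q_Y}u_0(x,y)\phi(x)\psi(y)\,dy\,dx=\int_D v(x)\phi(x)\int_{Q_Y}\psi(y)\,dy\,dx,
\]
which forces $\tilde v=v$ a.e.\ on $D$. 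The essential obstacle is precisely this matching step: because $R_Y$ and $Q_Y$ are topologically disconnected in the limit, the naive two-scale procedure delivers a priori two unrelated scalar fields on $D$, and tying the cell-wise $Q_Y$-component to the exterior trace seems unavoidable without either an extension operator or an equivalent combination of cell-wise Poincar\'e and trace estimates across $\partial D$.
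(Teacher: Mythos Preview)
Your divergence-trick arguments for the $y$-independence on $D\times Q_Y$, $D\times R_Y$, and $(\Omega\setminus D)\times Y$ match the paper's proof. There is, however, a small gap in your treatment of $D\times\Sigma_Y$: a test function with $y$-support compactly contained in $\Sigma_Y$ does \emph{not} localize $\Phi(x,x/\eps)$ to $\Sigma_\eps$ alone, because $\Sigma_Y=\Sigma_Y^\eps\cup\bar K_Y^\eps$ and the channels $K_\eps$ belong to $\Omega_\eps$, where $\tilde u^\eps=u^\eps$ need not vanish. The paper repairs this by splitting off the channel contribution $\int_{K_\eps}\tilde u^\eps\,\varphi_\eps\to 0$, using $|K_\eps|\to 0$ together with the uniform $L^2$-bound; you should insert the same step.

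Your ``delicate matching step'' is, at the level of this proposition, superfluous: the statement only asks for $v\in L^2(\Omega)$, and two functions $\tilde v\in L^2(D)$ and $v_0\in L^2(\Omega\setminus D)$ always glue to a single element of $L^2(\Omega)$ with no compatibility condition. What you have actually sketched --- the Cioranescu--Saint Jean Paulin extension from the connected component $Q_Y$ and the identification of its strong $L^2$-limit with the $Q_Y$-piece of $u_0$ --- is precisely the argument the paper gives in Step~1 of the proof of Lemma~\ref{lem:twoscalegradients} to upgrade $v$ to $H^1(\Omega)$. So your proof is correct modulo the channel oversight, but it over-proves: you have folded the first regularity step of the subsequent lemma into the present one.
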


\begin{proof}
  We start with the characterization of $u_0(x,y)$ for $(x,y)\in
  D\times \Sigma_Y$. Therefore, we consider localized test functions
  $\varphi_\eps(x):=\theta(x)\Psi\left(\frac{x}{\eps}\right)$ with
  $\theta\in C_c^\infty(D)$ and $\Psi\in C^\infty(Y)$ with
  $\text{supp}(\Psi)\subset\Sigma_Y$.  On the one hand we find,
  exploiting the two-scale convergence of $\tilde u^\eps$,
\begin{align*}
  \int_{\Omega}\tilde{u^\eps}(x)\varphi_\eps(x)\,dx\rightarrow \int_D
  \theta(x)\int_{\Sigma_Y} u_0(x,y)\Psi(y)\,dy\,dx\,.
\end{align*}
On the other hand one obtains, using the fact that
$\text{supp}(\varphi_\eps)\subset (\Sigma_\eps\cup \bar K_\eps)$, the
definition of $\tilde{u^\eps}$, the $L^2(\Omega)$-bound for
$\tilde{u^\eps}$ and the vanishing volume fraction of the channels
$K_\eps$,
\begin{align*}
  \int_{\Omega}\tilde{u^\eps}(x)\varphi_\eps(x)\,dx
  =\int_{K_\eps}\tilde{u^\eps}(x)\varphi_\eps(x)\,dx\rightarrow
  0\,.
\end{align*}
Since the test functions were arbitrary, we conclude $u_0(x,y)=0$ for
$(x,y)\in D\times\Sigma_Y$.

We next show that $u_0(x,y)$ is independent of $y$ for $(x,y)\in
D\times Q_Y$.  As before we consider localized test functions
$\varphi_\eps(x):=\theta(x)\Psi\left(\frac{x}{\eps}\right)$ with
$\theta\in C_c^\infty(D)$ and $\Psi\in C^\infty_{per}(Y,\R^n)$ with
$\text{supp}(\Psi)\subset Q_Y$. Multiplying $\nabla u^\eps$ by
$\eps\varphi_\eps$ and integrating by parts gives
\begin{align}
\label{eq:test0}
\eps\int_{\Omega_\eps}\nabla u^\eps(x)\cdot\varphi_\eps(x)\,dx=
-\int_{\Omega_\eps}u^\eps(x)\left[(\nabla_y\cdot\Psi)\left(\frac{x}{\eps}\right)\theta(x) + 
\eps \Psi\left(\frac{x}{\eps}\right)\cdot\nabla_x\theta(x)\right]\,dx\,.
\end{align}
Note that no boundary terms appear due to the compact support of
$\theta$ and $\Psi$. We can now pass to the limit $\eps\rightarrow 0$.
Due to the uniform $L^2$-bound of $\nabla u^\eps$,
cf. \eqref{eq:uniformboundimplied}, the left hand side of
\eqref{eq:test0} vanishes in the limit as $\eps\rightarrow 0$.  On the
right hand side we exploit the two-scale convergence of
$\tilde{u^\eps}$ to find
\begin{align*}
  &\int_{\Omega_\eps}u^\eps(x)\left[(\nabla_y\cdot\Psi)\left(\frac{x}{\eps}\right)\theta(x)
    +
    \eps \Psi\left(\frac{x}{\eps}\right)\cdot\nabla_x\theta(x)\right]\,dx\\
  &\qquad
  =\int_{\Omega}\tilde{u^\eps}(x)\left[(\nabla_y\cdot\Psi)\left(\frac{x}{\eps}\right)\theta(x)
    +
    \eps \Psi\left(\frac{x}{\eps}\right)\cdot\nabla_x\theta(x)\right]\,dx\\
  &\qquad \stackrel{\eps\rightarrow
    0}{\rightarrow}\int_D\theta(x)\int_{Q_Y}u_0(x,y)(\nabla_y\cdot\Psi)(y)\,dy\,dx\,.
\end{align*}
Since the test functions $\theta,\Psi$ were arbitrary, this implies
that $\nabla_y u_0(x,.) = 0$ in the sense of distributions in $Q_Y$
for almost every $x\in D$. Therefore $u_0$ does not depend on $y$ for
$(x,y)\in D\times Q_Y$.  We may therefore write $u_0(x,y)=v(x)$ for
some $v\in L^2(D)$.

For the domains $(\Omega\setminus D)\times Y$ and $D\times R_Y$ 
one proceeds analogously to show that $u_0(x,.)$ does
not depend on $y$.
\end{proof}

\subsection{Two-scale convergence of the gradients}

Due to the uniform bound \eqref{eq:uniformboundimplied} we can find
also a two-scale limit of the sequence $\nabla u^\eps$ (upon extending
by zero).  The subsequent Lemma provides a first characterization of
the two-scale limit. We use the space $H_{per}^1(Q_Y)$ of those
functions in $H^1(Q_Y)$ for which also their periodic extension to
$\R^n$ is locally of class $H^1$.

\begin{lemma}
  \label{lem:twoscalegradients}
  Let $u^\eps\in H^1(\Omega_\eps)$ be a sequence of solutions to
  \eqref{eq:Helmholtzeps} that satisfies the uniform bound
  \eqref{eq:uniformboundassume}. Let $\xi^\eps\in L^2(\Omega;\R^n)$ be
  the trivial extension of $\nabla u^\eps$ by zero and let $v$ be the
  exterior field of Proposition \ref{prop:charactu0}. We assume that
  $\xi^\eps$ converges in two scales to some limit function $\xi_0\in
  L^2(\Omega\times Y)^n$. Then the following holds:
  \begin{enumerate}
	\item The exterior field $v$ is of class $H^1(\Omega)$.
  \item For $x\in\Omega\setminus D$ one has $\xi_0(x,y)=\nabla_x v(x)$.
  \item There exists $v_1\in L^2(D;H_{per}^1(Q_Y))$ such that for $x\in D$ one has
    \begin{align}
      \begin{split}
        \label{eq:charactxi}
	\xi_0(x,y)=
        \begin{cases}
          \nabla_x v(x) + \nabla_y v_1(x,y) & \;\;\text{ for } y\in Q_Y, \\
          0& \;\; \text{ for } y\in R_Y\cup \Sigma_Y.
        \end{cases}
      \end{split}
    \end{align}
  \end{enumerate}
\end{lemma}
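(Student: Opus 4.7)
I would split the proof into handling parts (i), (ii) and the $Q_Y$-part of (iii) via a standard extension-operator argument combined with Allaire's two-scale gradient lemma, and then treat the disconnected pieces $R_Y$ and $\Sigma_Y$ by direct test-function manipulations that exploit both the Helmholtz equation and the gradient nature of $\nabla u^\eps$.

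\textbf{Extension, $v\in H^1(\Omega)$, and $Q_Y$-part.} Since $Q_Y$ is connected and its periodic translates meet across cell faces, a bounded extension operator $P^\eps\colon H^1(Q_\eps)\to H^1(\Omega)$ with norm uniform in $\eps$ is available (classical for perforated domains). Set $\hat u^\eps:=P^\eps(u^\eps|_{Q_\eps})$, extended by $u^\eps$ itself on $\Omega\setminus D$. The bound \eqref{eq:uniformboundimplied} gives $\hat u^\eps\weak \hat v$ in $H^1(\Omega)$ along a subsequence. Because $\hat u^\eps=u^\eps$ on $Q_\eps$, the two-scale integrals of $\hat u^\eps$ against $\theta(x)\Psi(x/\eps)$ with $\supp\Psi\subset Q_Y$ coincide with those of $\tilde{u^\eps}$; by Proposition~\ref{prop:charactu0} they both give $v(x)$, while the $y$-independence of the two-scale limit of an $H^1$-bounded sequence forces $\hat v$ to be $y$-independent. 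Consequently $\hat v=v$ a.e., proving (i). Allaire's classical two-scale gradient lemma applied to $\hat u^\eps$ now produces $v_1\in L^2(\Omega;H^1_{per}(Y)/\R)$ with $\nabla\hat u^\eps\stackrel{2s}{\weak}\nabla_x v+\nabla_y v_1$. Restricting to $\Omega\setminus D$ (no micro-structure, so $\nabla_y v_1\equiv 0$) yields (ii); restricting to $D\times Q_Y$ (where $\xi^\eps=\nabla\hat u^\eps$) yields the first line of (iii).

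\textbf{Treatment of $R_Y\cup \Sigma_Y$.} The $\Sigma_Y$ part is immediate: $\xi^\eps\equiv 0$ on $\Sigma_\eps$ and $|K_\eps|\to 0$, so two-scale testing against $\theta(x)\Psi(x/\eps)$ with $\supp\Psi\subset\Sigma_Y\setminus\Gamma_Y$ yields $\xi_0\equiv 0$ on $D\times\Sigma_Y$. For $R_Y$ the strategy is to show that for a.e.\ $x\in D$, the field $\xi_0(x,\cdot)\in L^2(R_Y)^n$ is simultaneously divergence-free, curl-free, and has vanishing normal trace; on the simply-connected Lipschitz cavity $R_Y$ these three conditions force $\xi_0(x,\cdot)=\nabla_y h$ with $h$ harmonic and $\partial_\nu h=0$, hence $h$ constant and $\xi_0\equiv 0$. \emph{Divergence-freeness:} test \eqref{eq:Helmholtzeps} on $R_\eps$ with $\phi^\eps(x)=\theta(x)\psi(x/\eps)$, $\theta\in C_c^\infty(D)$, $\psi\in C_c^\infty(R_Y)$; isolating the $\eps^{-1}\theta\nabla_y\psi(x/\eps)$ contribution from $\nabla\phi^\eps$, multiplying the resulting identity by $\eps$ and passing to the limit gives $\int_{R_Y}\xi_0\cdot\nabla_y\psi\,dy=0$, i.e.\ $\div_y\xi_0=0$ in $\mathcal{D}'(R_Y)$. \emph{Curl-freeness:} the same scaling recipe applied to $\int_{R_\eps^k}\nabla u^\eps\cdot(\nabla\times\vec A)\,dx=0$ (valid for every $\vec A\in C_c^\infty(R_\eps^k)^3$ since $\nabla u^\eps$ is a gradient; in 2D use $\nabla^\perp\psi$) yields $\int_{R_Y}\xi_0\cdot(\nabla_y\times\vec a)\,dy=0$ for all $\vec a\in C_c^\infty(R_Y)^3$, whence $\curl_y\xi_0=0$. \emph{Normal trace:} enlarge the admissible class in the Helmholtz test to $\psi\in C^\infty(\bar R_Y)$ that vanishes only in a neighbourhood of the channel mouth $(y_R,0)$; the channel opening $\partial R_\eps^k\cap\partial K_\eps^k$ has diameter $O(\eps^p)$ while the rest of $\partial R_\eps^k$ carries the homogeneous Neumann condition, so the boundary term $\int\phi^\eps\,\partial_n u^\eps\,ds$ drops out and the previous limit still produces $\int_{R_Y}\xi_0\cdot\nabla_y\psi\,dy=0$. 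Integrating by parts in $y$ and using $\div_y\xi_0=0$ yields $\int_{\partial R_Y}(\xi_0\cdot\nu)\psi\,ds=0$ for all such $\psi$, hence $\xi_0\cdot\nu=0$ on $\partial R_Y$.

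\textbf{Main obstacle.} The technically delicate step is the normal-trace identity on $\partial R_Y$: without it the bulk identities $\div_y\xi_0=0$ and $\curl_y\xi_0=0$ still permit non-trivial Neumann harmonic fields, and the uniqueness argument collapses. Justifying that, for admissible test functions, the channel-mouth contribution to the integration by parts on $R_\eps^k$ is truly asymptotically negligible is where the precise subscale $\eps^p$ of the channel opening from Section~\ref{sec.geometry} enters the analysis, together with trace estimates on the shrinking channel cross-section.
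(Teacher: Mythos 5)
Your proof follows essentially the same route as the paper's: a uniform extension operator on the connected exterior component gives $v\in H^1(\Omega)$ and Allaire's gradient lemma gives the $Q_Y$-decomposition, while on $R_Y$ you combine curl-freeness (via testing $\curl(\nabla u^\eps)=0$) with test functions vanishing near the channel mouth --- exactly the paper's class $\mathcal{A}_\delta(R_Y)$ and its capacity argument --- to force $\xi_0=0$. The only point stated too quickly is assertion (ii): ``no micro-structure, so $\nabla_y v_1\equiv 0$'' does not follow from the decomposition alone; the paper justifies it by local $H^2$-bounds for $u^\eps$ outside $D$, which upgrade $\nabla u^\eps\to\nabla v$ to strong $L^2_{\loc}$-convergence so that the two-scale limit of the gradients is $y$-independent there.
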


We note that the lemma implies for the interior of the resonators
\begin{align}
  \label{eq:charactxi2}
  \nabla u^\eps\,{\bf 1}_{R_\eps}\stackrel{2s}{\weakto}0\quad\text{ as
  }\eps\rightarrow 0\,.
\end{align}

\begin{proof} {\em Step 1. Regularity of $v$.} We consider the domain
  $\hat\Omega_\eps:=\Omega\setminus(\bar\Sigma_\eps\cup \bar
  K_\eps\cup \bar R_\eps)$, which is obtained from $\Omega$ by
  removing the union of obstacles $\Sigma_\eps$, slits $K_\eps$ and
  interior regions $R_\eps$.  In particular, the perforation of
  $\hat\Omega_\eps$ in each periodicity cell is a Lipschitz domain
  without substructure.  We construct a sequence $\hat v^\eps\in
  H^1(\Omega)$ by setting $\hat v^\eps:=u^\eps$ in $\hat\Omega_\eps$
  and extending to $\Omega$. Actually, it is well known that there
  exists a family of extension operators $\mathcal{P}_\eps:
  H^1(\hat\Omega_\eps)\rightarrow H^1(\Omega)$ such that
  \begin{align*}
    \left\|\mathcal{P}_\eps u^\eps\right\|_{H^1(\Omega)}\leq C\|u^\eps\|_{H^1(\hat\Omega_\eps)}
  \end{align*}
  for some $C>0$ independent of $\eps$, see \cite{Cioranescu1999},
  Chapter 1. Essentially, $\mathcal{P}_\eps$ is defined by using in
  each perforation the harmonic extension of the boundary
  values. Hence, $\hat v^\eps:=\mathcal{P}_\eps u^\eps$ is uniformly
  bounded in $H^1(\Omega)$.  We therefore find, up to a subsequence, a
  limit function $\hat v\in H^1(\Omega)$ such that $\hat v^\eps$
  converges in two scales and strongly in $L^2(\Omega)$ to the
  ($y$-independent) function $\hat v=\hat v(x)$.

  Since $\hat v^\eps=u^\eps$ in $\hat\Omega_\eps$, the strong limit
  $\hat v$ coincides with the two-scale limit of $u^\eps$ in the
  exterior of the resonators, i.e. $\hat v=v$. This proves $v\in
  H^1(\Omega)$.

  \medskip {\em Step 2. Characterization outside $D$.}  Outside of
  $D$, the elliptic equation for $u^\eps$ implies that the solution
  sequence is locally $H^2$-bounded. Therefore, the distributional
  convergence $\nabla u^\eps\to \nabla v$ is locally a strong
  $L^2$-convergence.  In this case the two-scale limit of gradients
  coincides with the strong limit.
	
  \medskip {\em Step 3. Characterization in $D$: The case $y\in
    Q_Y\cup\Sigma_Y$.}  Concerning the characterization for $(x,y)\in
  D\times Q_Y$ we refer to a standard argument, which can be found
  e.g.\,in the proof of Theorem 2.9 in \cite{Allaire1992}. It provides
  the existence of a two-scale function $v_1\in L^2(D;
  H_{per}^1(Q_Y))$ such that
  \begin{align}
    \label{eq:gradientcharact1}
    \xi_0(x,y)=\nabla_x v(x) + \nabla_y v_1(x,y)\quad\text{for }
    (x,y)\in D\times Q_Y.
  \end{align}  
  We emphasize that characterization \eqref{eq:gradientcharact1}
  heavily relies on the fact that the union of exterior domains is
  connected.

  The claim for $y\in \Sigma_Y$ follows as in the proof of Proposition
  \ref{prop:charactu0}: We exploit the vanishing volume fraction of
  the channels and the uniform bound for $\nabla u^\eps$.
	
  \medskip {\em Step 4. Characterization in $D$: The case $y\in R_Y$.}
  For $y\in R_Y$ we argue as follows: Let $\theta\in C_c^\infty(D)$
  and $\Psi\in C_c^\infty(R_Y,\R^n)$ be arbitrary. In three space
  dimensions, $n=3$, we exploit the relation $\curl(\nabla u^\eps)=0$,
  while in two space dimensions the $\curl$-operator has to be
  replaced by a rotated divergence.  In the following, we perform the
  calculations only for $n=3$.  Multiplying the identity $\curl(\nabla
  u^\eps)=0$ with $\eps\,\theta(x)\Psi\left(\frac{x}{\eps}\right)$ and
  integrating by parts one finds
  \begin{align*}
    0&=\int_{\Omega_\eps}\curl(\nabla u^\eps)(x)\cdot\eps\Psi\left(\frac{x}{\eps}\right)\theta(x)\,dx\\
    &=\int_{\Omega_\eps}\nabla u^\eps(x)\cdot
    \left[\curl_y\Psi\left(\frac{x}{\eps}\right)\theta(x)-\eps\Psi\left(\frac{x}{\eps}\right)\wedge\nabla_x\theta(x)\right]\,dx\\
    &\stackrel{\eps\rightarrow 0}{\rightarrow}
    \int_D\int_{R_Y}\xi_0(x,y)\cdot\curl_y\Psi(y)\theta(x)\,dy\,dx\,.
  \end{align*}
  Since $\theta\in C_c^\infty(D)$ was arbitrary we conclude that for
  a.e.\,$x\in D$ and every $\Psi\in C_c^\infty(R_Y,\R^3)$ there holds
  \begin{align*}
    \int_{R_Y} \xi_0(x,y)\cdot\curl_y\Psi(y)\,dy=0\,.
  \end{align*}
  We obtained that the (distributional) curl of $\xi_0$ vanishes.
  Since the interior domain $R_Y$ is simply connected,
  $\xi_0(x,\cdot)$ must be a gradient: There exists a potential
  $w_1\in L^2(D; H^1(R_Y))$ such that $\xi_0(x,\cdot)=\nabla_y
  w_1(x,\cdot)$ for a.e. $x\in D$. With an analogous calculation the
  same result can be obtained also in two space dimensions, $n=2$.

  We next show that $w_1(x,\cdot)$ is constant for a.e.\,$x\in
  D$. This implies $\xi_0(x,y) = \nabla_y w_1(x,y)=0$; with that, the
  lemma is proven.

  We consider microscopic test-functions $\psi\in C^\infty(\bar R_Y)$
  with vanishing values at the entrance of the channel.  More
  precisely, denoting by $\Gamma_Y^{\eps,R}:=\bar R_Y\cap \bar
  K_Y^\eps=\{y_R\}\times \overline{B^{n-1}_{\alpha\eps^p}(0)}$ the interface
  between $R_Y$ and the channel $K_Y^\eps$ we define, for $\delta>0$
  fixed, the set
\begin{align}
\mathcal{A}_\delta(R_Y):=\left\{\psi\in C^\infty(\bar R_Y)\,|\, \psi=0\,\text{ on } \{y_R\}\times B_\delta^{n-1}(0) \right\}.
\end{align}
Note that for $\delta>0$ fixed and $\eps$ sufficiently small each
$\psi_\delta\in \mathcal{A}_\delta(R_Y)$ satisfies $\psi_\delta=0$ on
$\Gamma_Y^{\eps,R}$.  Let now $\theta\in C_c^\infty(D)$ and
$\psi_\delta\in \mathcal{A}_\delta(R_Y)$ be arbitrary. We multiply
$-\Delta u^\eps$ with
$\eps\,\theta(x)\psi_\delta\left(\frac{x}{\eps}\right)$ and integrate
by parts to find
\begin{align}
  \begin{split}
    \label{eq:testnabla}
    &\eps\int_{\Omega_\eps}(-\Delta u^\eps)(x)\, \theta(x)\psi_\delta\left(\frac{x}{\eps}\right)\,dx\\
    &\qquad =\int_{R_\eps}\nabla u^\eps\cdot\left[\eps
      \nabla_x\theta(x)\psi_\delta\left(\frac{x}{\eps}\right) +
      \theta(x)\nabla_y\psi_\delta\left(\frac{x}{\eps}\right)\right]\,dx\,.
  \end{split}
\end{align}
Note that no boundary terms appear due to the Neumann boundary
conditions of $u^\eps$ and the fact that the test function
$\Psi_\delta$ vanishes at the interface between $R_Y$ and the
channel. We next pass to the limit in \eqref{eq:testnabla}. Exploiting
that $\Delta u^\eps$ is uniformly bounded in $L^2(\Omega_\eps)$ we
find that the left hand side of \eqref{eq:testnabla} vanishes in the
limit as $\eps\rightarrow 0$.  On the right hand side we use the
uniform boundedness of $\nabla u^\eps$ to conclude that the first
term vanishes in the limit, while for the second term we obtain
\begin{align*}
  \int_{\Omega_\eps}\nabla u^\eps\cdot\theta(x)\nabla_y\psi_\delta\left(\frac{x}{\eps}\right)\,dx
  &\stackrel{\eps\rightarrow 0}{\rightarrow}
  \int_D\int_{R_Y}\xi_0(x,y)\cdot\nabla_y\psi_\delta(y)\theta(x)\,dy\,dx\\
  &=\int_D\int_{R_Y}\nabla_y
  w_1(x,y)\cdot\nabla_y\psi_\delta(y)\theta(x)\,dy\,dx\,.
\end{align*}
In the last line we used the characterization $\xi_0(x,y)=\nabla_y
w_1(x,y)$. Since $\theta$ was arbitrary we conclude, for a.e. $x\in D$
and every test function $\psi_\delta(y)\in \mathcal{A}_\delta(R_Y)$,
\begin{align}
\label{eq:harmonicdelta}
\int_{R_Y}\nabla_y w_1(x,y)\cdot\nabla_y\psi_\delta(y)\,dy=0\,.
\end{align}
In the above equality the test functions $\psi_\delta$ are restricted
to the set $\mathcal{A}_\delta(R_Y)$.  We claim that
\eqref{eq:harmonicdelta} holds for all functions $\psi\in
C^\infty(\bar R_Y)$.  Indeed, in the limit $\delta\rightarrow 0$ the
set $\{y_R\}\times B^{n-1}_\delta(0)$ shrinks to a set of vanishing
$H^1$-capacity. In particular, for every $\psi\in C^\infty(\bar R_Y)$
there exists an approximating sequence $\psi_\delta \in
\mathcal{A}_\delta(R_Y)$ with $\psi_\delta\rightarrow \psi$ in
$H^1(R_Y)$ as $\delta\rightarrow 0$. We therefore conclude that for every $\psi\in
C^\infty(\bar R_Y)$
\begin{align*}
\int_{R_Y}\nabla_y w_1(x,y)\cdot\nabla_y\psi(y)\,dy=0\,.
\end{align*}
We obtain that $w_1(x,\cdot)$ is a solution to
\begin{align*}
-\Delta_y w_1(x,\cdot)&=0\quad\text{ in } R_Y,\\
\partial_n w_1(x,\cdot)&=0 \quad\text{ on } \partial R_Y\,.
\end{align*}
All solutions to this elliptic problem are constant functions, as can
be shown by testing with $w_1$. We obtain $\xi_0(x,y)=\nabla_y
w_1(x,y)=0$ for $(x,y)\in D\times R_Y$, which concludes the proof.
\end{proof}

In the next lemma we relate, via cell problems, the two-scale function
$v_1$ (the exterior two-scale corrector) to the field $v$ (which
represents average values outside the resonators). The procedure
follows the standard arguments, our interest is to show rigorously
that the channels do not affect the equations for the exterior
corrector function $v_1$. As before, $n$ denotes exterior normal
vectors on boundaries.

\begin{lemma}[Characterization of $v_1$]
  Let $v$ and $v_1$ be as in Proposition \ref{prop:charactu0} and
  Lemma \ref{lem:twoscalegradients}. Then the microscopic function
  $v_1$ can be written as
  \begin{align}
    \label{eq:representv1}
    v_1(x,y)=\sum_{i=1}^n \chi_i(y)\partial_{x_i}v(x)\,,
  \end{align}
  where the shape functions $\chi_i\in H_{per}^1(Q_Y)$ with
  $i\in\{1,...,n\}$ satisfy the following cell problem with Neumann
  boundary conditions on $\partial Q_Y\setminus\partial Y$ and
  periodicity boundary conditions on $\del Y$:
\begin{align}
\begin{split}
  \label{eq:cellproblemw}
  \Delta_y \chi_i & = 0\quad\text{in }Q_Y\,,\\
  \partial_n \chi_i & =- e_i\cdot n\quad\text{on }\partial Q_Y\setminus\partial Y\,.
\end{split}
\end{align}
\end{lemma}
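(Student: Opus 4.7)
The plan is to derive the weak form of a Neumann-periodic cell problem for $v_1(x,\cdot)$ on $Q_Y$ by inserting oscillating test functions into the weak form of the Helmholtz equation, and then to conclude by linearity that $v_1$ decomposes via the standard correctors $\chi_i$. Concretely, I would fix $\theta\in C_c^\infty(D)$ and $\psi\in C_{per}^\infty(Y)$ and test \eqref{eq:Helmholtzeps} against $\phi_\eps(x):=\eps\,\theta(x)\psi(x/\eps)$. Since $\theta$ has compact support in $D$ and $\partial_n u^\eps=0$ on $\partial\Omega_\eps\setminus\partial\Omega$, no boundary terms arise and the weak identity reads
\begin{align*}
\int_{\Omega_\eps}\nabla u^\eps\cdot\bigl[\eps\nabla_x\theta(x)\psi(x/\eps)+\theta(x)\nabla_y\psi(x/\eps)\bigr]\,dx=\omega^2\eps\int_{\Omega_\eps}u^\eps\,\theta(x)\psi(x/\eps)\,dx.
\end{align*}
The uniform $H^1$-bound \eqref{eq:uniformboundimplied} makes the right-hand side and the $\eps\nabla_x\theta$-term on the left vanish as $\eps\to 0$. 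For the remaining term I would use two-scale convergence of $\xi^\eps$ together with the characterization of $\xi_0$ from Lemma \ref{lem:twoscalegradients} to obtain the limit identity
\begin{align*}
\int_D\theta(x)\int_{Q_Y}\bigl[\nabla v(x)+\nabla_y v_1(x,y)\bigr]\cdot\nabla_y\psi(y)\,dy\,dx=0,
\end{align*}
where contributions from $R_Y$ and $\Sigma_Y$ are absent because $\xi_0$ vanishes there.

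Since $\theta$ is arbitrary, the inner integral vanishes for almost every $x\in D$ and every $\psi\in C_{per}^\infty(Y)$; a standard density argument, based on extending $H_{per}^1(Q_Y)$-functions periodically to $Y$ and mollifying, extends this to all $\psi\in H_{per}^1(Q_Y)$. This is exactly the weak formulation of $-\Delta_y v_1(x,\cdot)=0$ in $Q_Y$ with Neumann data $\partial_n v_1(x,\cdot)=-\nabla v(x)\cdot n$ on $\partial Q_Y\setminus\partial Y$ and periodic boundary conditions on $\partial Y$. Linearity in $\nabla v(x)$ together with uniqueness up to a $y$-independent additive constant then yields the representation \eqref{eq:representv1} with $\chi_i$ solving \eqref{eq:cellproblemw}.

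The main point, and the reason the preamble stresses the role of the channels, is that for every finite $\eps$ the microscopic channel opening $\{y_Q\}\times B_{\alpha\eps^p}^{n-1}(0)$ sits on $\partial Q_Y\setminus\partial Y$ and connects $Q_Y$ with $R_Y$ through $K_Y^\eps$, so a priori an extra boundary contribution at the channel mouth could enter the effective cell problem. The computation above shows that no such term survives: by Lemma \ref{lem:twoscalegradients} the two-scale limit $\xi_0$ already vanishes on all of $\Sigma_Y\supset K_Y^\eps$, and the union $K_\eps$ has vanishing Lebesgue measure, so the limiting corrector problem reduces cleanly to the unperturbed Neumann-periodic problem on $Q_Y$.
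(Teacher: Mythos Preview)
Your argument is correct and follows the same overall strategy as the paper: test the Helmholtz equation with $\eps\,\theta(x)\psi(x/\eps)$, pass to the two-scale limit, and read off the weak Neumann--periodic cell problem for $v_1(x,\cdot)$ on $Q_Y$.

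The only noteworthy difference is in how the channel and resonator regions are disposed of. The paper localizes the microscopic test function by requiring $\mathrm{supp}(\psi)\subset B_\delta(Q_Y)$ with $B_\delta(Q_Y)\cap R_Y=\emptyset$; this forces $\Omega_\eps\cap\mathrm{supp}\bigl(\psi(\cdot/\eps)\theta\bigr)\subset Q_\eps\cup K_\eps$, and the $K_\eps$-contribution is then killed by an explicit estimate using $|K_\eps|\to 0$ together with the uniform $H^1$-bound. You instead take $\psi\in C_{per}^\infty(Y)$ without any support restriction and eliminate the $R_Y$- and $\Sigma_Y$-contributions in one stroke by invoking $\xi_0=0$ on $R_Y\cup\Sigma_Y$ from Lemma~\ref{lem:twoscalegradients}. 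Your route is slightly more economical, since the vanishing of $\xi_0$ on $R_Y\cup\Sigma_Y$ has already been established; the paper's route is more self-contained at this point and makes the ``channels do not contribute'' mechanism visible through a direct volume estimate rather than through the earlier lemma. Either way the resulting cell problem and the representation \eqref{eq:representv1} are identical.
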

\begin{proof}
  For an arbitrary microscopic test function $\psi\in H_{per}^1(Q_Y)$
  we will prove that there holds, for a.e.\,$x\in D$,
  \begin{align}
    \label{eq:toshow1}
    \int_{Q_Y}\left[\nabla_x v(x)+\nabla_y v_1(x,y)\right]\cdot\nabla_y\psi(y)\,dy=0\,.
  \end{align}
  Equation \eqref{eq:toshow1} implies the claim of the lemma. Indeed,
  \eqref{eq:toshow1} is the weak formulation of the Neumann problem
  \begin{align*}
    -\Delta_y v_1(x,y) =
    -\nabla_y\cdot\left[\nabla_x v(x)+\nabla_y v_1(x,y)\right]
    &=0\quad\text{in } Q_Y\,,\\
    (\nabla_x v(x) + \nabla_y v_1(x,y))\cdot n&=0\quad\text{on
    } \partial Q_Y\setminus\partial Y\,,
  \end{align*}
  supplemented with periodicity boundary conditions on $\partial Y$. By linearity of
  this problem, $v_1$ depends linearly on $\nabla_x v$. This yields
  the representation formula \eqref{eq:representv1}.

  \smallskip {\it Derivation of \eqref{eq:toshow1}:} In the following
  we fix a small parameter $\delta>0$ in such a way that the
  $\delta$-neighbourhood of $Q_Y$ in $Y$ does not intersect the
  interior domain $R_Y$, i.e. $B_{\delta}(Q_Y)\cap R_Y=\emptyset$.
  Let $\theta\in C_c^\infty(D)$ be arbitrary. We consider microscopic
  test-functions $\psi\in H^1_{per}(Y)$ with $\text{supp}(\psi)\subset
  B_{\delta}(Q_Y)$.  We emphasize that all $H^1_{per}(Q_Y)$-functions
  are admissible, we do not prescribe any boundary conditions on the
  channel opening. We multiply the Helmholtz equation
  \eqref{eq:Helmholtzeps} with
  $\varphi_\eps(x):=\eps\theta(x)\psi\left(\frac{x}{\eps}\right)$ to
  find
  \begin{align}
    \label{eq:testphi2}
    \int_{\Omega_\eps}\nabla u^\eps(x)\cdot\left[\nabla_y
      \psi\left(\frac{x}{\eps}\right)\theta(x) +
      \eps\nabla_x\theta(x)\psi\left(\frac{x}{\eps}\right)
    \right]\,dx=\eps\omega^2\int_{\Omega_\eps}u^\eps(x)\theta(x)\psi\left(\frac{x}{\eps}\right)\,dx\,.
  \end{align} 
  Due to the uniform $H^1$-bound on $u^\eps$ the second term on the
  left hand side of \eqref{eq:testphi2} and the right hand side vanish
  in the limit $\eps\rightarrow 0$.  Concerning the first term on the
  left hand side we calculate, using that $\text{supp}(\psi)\subset
  Q_Y\cup \Sigma_Y$ and thus
  $\Omega_\eps\cap\,\text{supp}\left(\psi\left(
      \frac{\cdot}{\eps}\right)\theta\right)\subset (Q_\eps\cup
  K_\eps)$,
  \begin{align*}
    &\int_{\Omega_\eps}\nabla u^\eps(x)\cdot\nabla_y \psi\left(\frac{x}{\eps}\right)\theta(x)\,dx\\
    &\qquad =\int_{Q_\eps}\nabla u^\eps(x)\cdot\nabla_y \psi\left(\frac{x}{\eps}\right)\theta(x)\,dx
    +\int_{K_\eps}\nabla u^\eps(x)\cdot\nabla_y \psi\left(\frac{x}{\eps}\right)\theta(x)\,dx\\
    &\qquad \stackrel{\eps\rightarrow 0}{\rightarrow}\int_D\int_{Q_Y}(\nabla_x v(x) + \nabla_y v_1(x,y))
    \cdot\nabla_y \psi(y)\theta(x)\,dy\,dx\,.
  \end{align*}
  In the last line we exploited the uniform $H^1$-bound of $u^\eps$
  and the vanishing volume fraction of the channels $K_\eps$.
  Moreover, we used characterization \eqref{eq:charactxi} for the
  two-scale limit of $\nabla u^\eps$. We obtain
  \begin{align*}
    \int_D\int_{Q_Y}(\nabla_x v(x) + \nabla_y v_1(x,y))
    \cdot\nabla_y \psi(y)\theta(x)\,dy\,dx=0
  \end{align*} 
  and hence, since $\theta\in C_c^\infty(D)$ was arbitrary, the claim
  \eqref{eq:toshow1}.
\end{proof}


\section{Proof of the main result}
In this section we prove that the external field $v$ of Proposition
\ref{prop:charactu0} satisfies the effective Helmholtz equation
\eqref{eq:effectiveHelmholtz-1}. Let us start by formulating the
strong version of our main result.

\begin{theorem}[Characterization of two-scale limits of solutions]
  \label{thm:main-2} 
  Let $u^\eps\in H^1(\Omega_\eps)$ be as in Theorem \ref {thm:main-1}:
  For every $\eps$, the function $u^\eps$ solves the Helmholtz
  equation \eqref{eq:Helmholtzeps} in a domain $\Omega_\eps$, which is
  obtained by removing from $\Omega\subset\R^n$ a family of small
  resonators. We assume that the family $u^\eps$ satisfies the uniform
  bound \eqref{eq:uniformboundassume}.  Due to this boundedness, we
  can extract a two-scale convergent subsequence and study $v\in
  L^2(\Omega)$ of Proposition \ref{prop:charactu0}.

  Then the field $v$ is of class $v\in H^1(\Omega)$ and it is a
  solution to the effective Helmholtz equation
  \begin{align}
    \label{eq:effectiveHelmholtznew1}
    -\nabla\cdot\left(A_{*}\nabla v\right) 
    &=\omega^2\Lambda\, v\quad\text{ in }\Omega\,,\\
    \label{eq:effectiveHelmholtznew2}
    v&=g\quad\quad\quad\text{ on }\partial\Omega\,.
  \end{align}
  The effective coefficients are $\Lambda(x)=1$ and $A_{*}(x) =
  \text{{\bf 1}}_{\R^n}$ for $x\in\Omega\setminus D$, whereas for
  $x\in D$ the factor $\Lambda(x) = \Lambda_{\eff}$ and the matrix
  $A_{*}(x) = A_{\eff}$ are defined in \eqref{eq:Lambda-1} and
  \eqref{eq:effectiveA}.
\end{theorem}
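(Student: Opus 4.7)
The plan is to pass to the two-scale limit in the weak formulation
\begin{equation*}
\int_{\Omega_\eps}\nabla u^\eps\cdot\nabla\varphi_\eps\,dx=\omega^2\int_{\Omega_\eps}u^\eps\,\varphi_\eps\,dx
\end{equation*}
for two classes of test functions: macroscopically smooth ones, yielding the outer diffusion--mass balance, and cell-level functions concentrated in the channels, producing the Helmholtz-resonator coupling between the interior value $w$ and the exterior value $v$ of the two-scale limit $u_0$. Eliminating $w$ between the two identities then produces exactly \eqref{eq:effectiveHelmholtznew1}. The boundary condition \eqref{eq:effectiveHelmholtznew2} is inherited from the $H^1$-bounded extension $\hat v^\eps$ constructed in Step~1 of the proof of Lemma \ref{lem:twoscalegradients}: since $\bar D\subset\Omega$, the perforations do not touch $\partial\Omega$ and thus $\hat v^\eps=u^\eps=g$ on $\partial\Omega$, which is preserved in the weak $H^1$-limit $v$ by continuity of the trace.

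For $\varphi\in C_c^\infty(\Omega)$, the characterization of $\nabla u^\eps$ in Lemma \ref{lem:twoscalegradients} together with the cell-problem representation $v_1(x,y)=\sum_i\chi_i(y)\partial_{x_i}v(x)$ gives by a standard two-scale computation
\begin{equation*}
\int_{\Omega_\eps}\nabla u^\eps\cdot\nabla\varphi\,dx\longrightarrow\int_{\Omega\setminus D}\nabla v\cdot\nabla\varphi\,dx+\int_D A_{\eff}\nabla v\cdot\nabla\varphi\,dx,
\end{equation*}
with $A_{\eff}$ as in \eqref{eq:effectiveA}. Splitting $\Omega_\eps$ into $\Omega\setminus D$, $Q_\eps$, $R_\eps$ and $K_\eps$ and applying Proposition \ref{prop:charactu0} cell by cell, while the channel volume vanishes, gives
\begin{equation*}
\omega^2\int_{\Omega_\eps}u^\eps\varphi\,dx\longrightarrow\omega^2\int_{\Omega\setminus D} v\,\varphi\,dx+\omega^2\int_D\bigl(Q\,v+V\,w\bigr)\varphi\,dx.
\end{equation*}
At this stage $v$ solves $-\nabla\cdot(A_*\nabla v)=\omega^2 h$ distributionally in $\Omega$, with $h=v$ on $\Omega\setminus D$ and $h=Qv+Vw$ on $D$. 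It remains to identify $w$ in terms of $v$.

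The main obstacle is exactly this last identification. The underlying idea is a flux balance in each cavity: integrating $-\Delta u^\eps=\omega^2 u^\eps$ over $R_k^\eps$ leaves only the flux through the channel opening, thanks to the Neumann condition on the rest of $\partial R_k^\eps$. To implement this rigorously, I would introduce a microscopic corrector $\eta_Y^\eps$ equal to $1$ on $R_Y$, equal to $0$ on $Q_Y$ and interpolating linearly along the channel axis in $K_Y^\eps$, and test \eqref{eq:Helmholtzeps} against $\theta(x)\eta_\eps(x)$ with $\eta_\eps(x)=\eta_Y^\eps(x/\eps)$ and $\theta\in C_c^\infty(D)$. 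The mass term $\omega^2\int u^\eps\theta\eta_\eps\,dx$ converges to $\omega^2 V\int_D w\,\theta\,dx$, because $u^\eps$ two-scale converges to $w$ on $R_Y$ by Proposition \ref{prop:charactu0} and the channel contributes negligibly. On the left-hand side, the term $\int\eta_\eps\nabla u^\eps\cdot\nabla\theta\,dx$ vanishes by \eqref{eq:charactxi2} and the vanishing measure of $K_\eps$, while the remaining term $\int_{K_\eps}\theta\,\nabla u^\eps\cdot\nabla\eta_\eps\,dx$ is concentrated in the channels, where $\nabla\eta_\eps=-e_1/(\eps L)$. The delicate point is proving that in each rescaled channel $K_k^\eps$ the derivative $\partial_{x_1}u^\eps$ is, in an appropriate averaged sense, close to $(v(x_k)-w(x_k))/(\eps L)$; multiplying by the cross-sectional area $A\eps^{(p+1)(n-1)}$ and summing over the $\eps^{-n}$ cells (recall $(p+1)(n-1)=n+1$) then produces the finite macroscopic limit $\int_D(w-v)\theta\,(A/L)\,dx$. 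This one-dimensional asymptotic analysis inside the shrinking channel is the genuinely new ingredient and follows the strategy of \cite{Schweizer-HelmRes}.

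Combining the two limit identities yields the resonator relation $(w-v)A/L=\omega^2 V w$, equivalently $w=Av/(A-\omega^2 LV)$. Substituting $Vw=AVv/(A-\omega^2 LV)$ into $Qv+Vw$ gives
\begin{equation*}
Qv+Vw=\left[Q-\frac{A/L}{\omega^2-A/(LV)}\right]v=\Lambda_{\eff}\,v.
\end{equation*}
This turns the distributional identity $-\nabla\cdot(A_*\nabla v)=\omega^2 h$ into \eqref{eq:effectiveHelmholtznew1}, and together with the boundary condition \eqref{eq:effectiveHelmholtznew2} handled in the first paragraph this completes the proof.
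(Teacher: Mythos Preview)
Your outline is correct and matches the paper's strategy: test with macroscopic $\varphi$ to obtain $-\nabla\cdot(A_*\nabla v)=\omega^2(Qv+Vw)$ in $D$ (the paper's Proposition~\ref{prop:effectiveeq}), then test with $\theta(x)\eta_Y^\eps(x/\eps)$ to extract the resonator relation, and finally eliminate $w$. Your $\eta_Y^\eps$ is exactly the paper's $\Phi_\eps/L$ from \eqref{eq:testfctaffine}, and your algebraic elimination is identical to the paper's.

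The one point where the paper's execution differs from your sketch is the ``delicate point''. You phrase it as showing that $\partial_{x_1}u^\eps$ is, on average in each channel, close to $(v-w)/(\eps L)$, and defer to \cite{Schweizer-HelmRes}. The paper organizes this differently: it introduces the current $j^\eps:=-\frac{1}{L\eps}\mathbf{1}_{K_\eps}\partial_{x_1}u^\eps$, shows it is bounded in $L^1$ and hence has a Radon-measure limit $j_*$, and then computes $j_*$ in \emph{two} ways. The first (Proposition~\ref{prop:1}) is your test with $\theta\Phi_\eps$ and gives $j_*=V\omega^2 w\,dx$. The second (Proposition~\ref{prop:geometricflow}) uses the \emph{same} test function but integrates by parts in the channel, reducing the channel integral to surface integrals over the two channel openings $\Gamma_Y^{\eps,R}$ and $\Gamma_Y^{\eps,Q}$; these interface averages are then shown to converge to $w$ and $v$ respectively via a separate regularity lemma (Lemma~\ref{lem:averages-inlet}), which establishes local $H^2$-bounds near the channel endpoints so that $U_\eps\to\xi_R$ uniformly on $\bar R_Y$. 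This integration-by-parts-to-interfaces plus $H^2$-regularity argument is the paper's replacement for your heuristic ``$\partial_{x_1}u^\eps\approx(v-w)/(\eps L)$'', and is the only substantive ingredient your sketch leaves unproved.
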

In order to prove Theorem \ref{thm:main-2} we have to introduce an
additional quantity, namely the current $j_*$. To define this
additional effective quantity, we first observe that a rescaled flux
in the channels is bounded in $L^1(\Omega)$: The sequence
\begin{align}
  \label{eq:currenteps}
  j^\eps(x):=-\frac{1}{L\eps}{\bf 1}_{K_\eps}(x)\partial_{x_1}u^\eps(x)\,,
\end{align}
where ${\bf 1}_{K_\eps}$ is the characteristic function of the
channels, satisfies
\begin{align*}
  \int_{\Omega}|j^\eps|=\int_{K_\eps}\frac{1}{L\eps}|\partial_{x_1}u^\eps|
  \leq
  \frac{|K_\eps|^{\frac12}}{L\eps}\left(\int_{K_\eps}|\partial_{x_1}u^\eps|^2\right)^{\frac12}\leq
  C,
\end{align*}
where $C$ is independent of $\eps$. In the last inequality we
exploited that the volume of the channels is of order $\eps^2$
(opening area times length times number is $\eps^3\eps \eps^{-2}$ for
$n=2$ and $(\eps^2)^2\eps \eps^{-3}$ for $n=3$), and that $u^\eps$ is
uniformly bounded in $H^1(\Omega_\eps)$.

In view of this estimate we find a subsequence $\eps\rightarrow 0$ and
a limit $j_*\in \calM(\bar\Omega)$ such that $j^{\eps}\rightarrow j_*$
weakly star in the space of Radon measures along the subsequence.  
\begin{definition}[The current $j_*$]
  \label{def:current}
  We define the current $j_*\in\mathcal{M}(\bar\Omega)$ as the weak
  star limit of $j^{\eps}$ in the space of Radon measures,
  i.e. through
  \begin{align}
    \label{eq:currentdef}
    -\frac{1}{L{\eps}}\int_{K_{\eps}}\partial_{x_1}u^{\eps}(x)
    \theta(x)\,dx\quad \stackrel{{\eps}\rightarrow
      0}{\rightarrow}\quad \int_{\bar\Omega} \theta(x)\,dj_*(x)
  \end{align}
  for all test-functions $\theta\in C(\bar\Omega)$.
\end{definition}
At a later stage, cf. Proposition \ref{prop:1}, we will see that the
measure $j_*\in\mathcal{M}(\Omega)$ is absolutely continuous with
respect to the Lebesgue measure and that its density coincides, up to
a prefactor, with the interior field $w(\cdot)$ from characterization
\eqref{eq:charactvnew}. In particular, the limit $j_*$ does not depend
on the choice of the subsequence $\eps\rightarrow 0$.

Note that, by definition of the set $K_\eps$, the current $j_*$
vanishes outside the perforated region $\bar D$. The integral on the
right hand side of \eqref{eq:currentdef} may be replaced by an
integral over $\bar D$:
\begin{align}
  \label{eq:restrictionj}
  \int_{\bar\Omega} \theta(x)\, dj_*(x)=\int_{\bar{D}}
  \theta(x)dj_*(x)\quad\text{ for all }\theta\in C(\bar\Omega)\,.
\end{align}

The proof of Theorem \ref{thm:main-2} consists of three steps: (1)
Establish a relation between the current $j_*$ and the interior field
$w$. This is achieved in Proposition \ref{prop:1}. (2) Derivation of a
geometric flow rule, which relates the current $j_*$ to the difference
$v-w$ between exterior field $v$ and interior field $w$. This is the
result of Proposition \ref{prop:geometricflow}. (3) Derivation of a
Helmholtz equation for $v$, cf.\,Proposition \ref{prop:effectiveeq}.

\subsection{Relation between current and the interior field}

In this subsection we prove that the current $j_*$ can be expressed in
terms of the interior field $w$. We recall that $V=|R_Y|$ is the
relative volume of the single resonator domain.

\begin{proposition}[Relation between the current and the interior field]
  \label{prop:1}
  In the situation of Theorem \ref{thm:main-2}, let the current
  $j_*\in\mathcal{M}({\bar\Omega})$ be as in Definition
  \ref{def:current} and let $w\in L^2(D)$ be the interior field of
  Proposition \ref{prop:charactu0}.  Then for every function
  $\theta\in C_c^\infty(\Omega)$ there holds
  \begin{align}
    \label{eq:weakformj}
    \int_\Omega\theta(x)\,dj_*(x)=V\omega^2\int_D \theta(x)w(x)\,dx\,.
  \end{align} 
  In particular, taking into account \eqref{eq:restrictionj}, one obtains that $j_*=j\,dx$, where $j$ is the $L^2(\Omega)$-function
  \begin{align}
    \label{eq:relationjw}
    j(x)=
    \begin{cases}
      V\omega^2 w(x)&\text{ for } x\in D\,, \\
      0&\text{ for } x\in \Omega\setminus D\,.
    \end{cases}
  \end{align}
\end{proposition}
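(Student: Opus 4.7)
The plan is to test the weak formulation of the Helmholtz equation \eqref{eq:Helmholtzeps} against an $\eps$-dependent cutoff tailored to isolate the bulk of the resonators while registering the longitudinal flux through the channels. Concretely, fix $\theta\in C_c^\infty(\Omega)$ and define $\varphi^\eps\in H^1(\Omega_\eps)$ by $\varphi^\eps=\theta$ on $R_\eps$, $\varphi^\eps=0$ on $Q_\eps$ and outside the perforated region, and on each channel $K_k^\eps$ set $\varphi^\eps(x)=\theta(x)\psi_\eps(x_1)$, where $\psi_\eps$ is the cellwise linear ramp in $x_1$ going from $1$ at the interface $\Gamma_k^{\eps,R}$ with $R_k^\eps$ to $0$ at the interface $\Gamma_k^{\eps,Q}$ with $Q_k^\eps$. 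By construction $\varphi^\eps$ is continuous across both channel faces (hence globally $H^1$ on $\Omega_\eps$), it vanishes on $\partial\Omega$, and it has piecewise gradient $\nabla\varphi^\eps=\nabla\theta$ on $R_\eps$ and $\nabla\varphi^\eps=\psi_\eps\nabla\theta+\theta\nabla\psi_\eps$ on $K_\eps$ with $\nabla\psi_\eps=-(\eps L)^{-1}e_1$.

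Inserting $\varphi^\eps$ into $\int_{\Omega_\eps}\nabla u^\eps\cdot\nabla\varphi^\eps=\omega^2\int_{\Omega_\eps}u^\eps\varphi^\eps$ produces four terms, which I analyze separately. First, $\omega^2\int_{R_\eps}u^\eps\theta\to\omega^2 V\int_D w\,\theta$ by two-scale convergence and the identification $u_0(x,y)=w(x)$ on $D\times R_Y$ from Proposition \ref{prop:charactu0}, together with $|R_Y|=V$. Second, $\omega^2\int_{K_\eps}u^\eps\theta\psi_\eps$ is dominated by $C\|u^\eps\|_{L^2(\Omega_\eps)}|K_\eps|^{1/2}=O(\eps)$ and vanishes. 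Third, $\int_{R_\eps}\nabla u^\eps\cdot\nabla\theta\to 0$, because Lemma \ref{lem:twoscalegradients} (cf.\ \eqref{eq:charactxi2}) gives $\nabla u^\eps\,{\bf 1}_{R_\eps}\stackrel{2s}{\weakto}0$, which in particular implies weak $L^2(\Omega)$-convergence to $0$ and so kills the pairing with the smooth target $\nabla\theta$. Finally, on the channels the $\psi_\eps\nabla\theta$ piece vanishes again by Cauchy--Schwarz and $|K_\eps|=O(\eps^2)$, while the essential contribution is
$$\int_{K_\eps}\theta\,\nabla u^\eps\cdot\nabla\psi_\eps\,dx \;=\; -\frac{1}{L\eps}\int_{K_\eps}\theta\,\partial_{x_1}u^\eps\,dx \;=\; \int_\Omega\theta\,j^\eps\,dx \;\longrightarrow\; \int_\Omega\theta\,dj_*,$$
which is exactly the defining convergence of $j_*$ in Definition \ref{def:current}.

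Collecting the four limits yields the identity \eqref{eq:weakformj}, and then \eqref{eq:relationjw} follows from the support property of $j^\eps$ in $\bar D$ together with a standard density argument (passing from $C_c^\infty(\Omega)$-tests to characteristic-function tests via $C(\bar\Omega)$). The main technical hurdle I anticipate is the bookkeeping around the piecewise test function $\varphi^\eps$: verifying rigorously that the matching of traces across the channel interfaces $\Gamma_k^{\eps,R}$ and $\Gamma_k^{\eps,Q}$ really produces $\varphi^\eps\in H^1(\Omega_\eps)$, and controlling the two ``cross'' terms on the channels (those carrying an extra factor $\psi_\eps$), which is handled by Cauchy--Schwarz, the uniform $H^1$-bound \eqref{eq:uniformboundimplied} on $u^\eps$, and the volume scaling $|K_\eps|=O(\eps^2)$.
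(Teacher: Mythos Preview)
Your argument is correct and coincides with the paper's proof: the paper also multiplies the Helmholtz equation by an oscillating test function $\varphi_\eps(x)=\theta(x)\Phi_\eps(x/\eps)\mathbf{1}_{Y^\eps}(x)$ that is constant on $R_Y$, zero on $Q_Y$, and affine in $y_1$ along the channel, then passes to the limit using \eqref{eq:charactxi2}, the vanishing channel volume, and the definition of $j_*$. The only cosmetic difference is that the paper normalizes $\Phi_\eps$ to run from $L$ to $0$ (rather than $1$ to $0$), which produces an extra factor $L$ on both sides that is divided out at the end.
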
 

\begin{proof}
  Let $\theta\in C_c^\infty(\Omega)$ be arbitrary. Our aim is to
  multiply the Helmholtz equation \eqref{eq:Helmholtzeps} with an
  oscillatory test function of the form
  $\varphi_\eps(x):=\theta(x)\Phi_\eps\left(\frac{x}{\eps}\right){\bf
    1}_{Y^\eps}(x)$, where $Y^\eps:=\bigcup_{k\in\mathcal{K}}
  Y_k^\eps$ denotes the union of $\eps$-cubes that are contained in
  the resonator region $D$.  The function $\Phi_\eps: Y\setminus
  \bar\Sigma_Y^\eps\rightarrow \R$ is defined as follows:
\begin{align}
\begin{split}
\label{eq:testfctaffine}
\Phi_\eps(y)=
\begin{cases}
    L \;\; &\text{ for } y\in R_Y\text{ (interior)},\\
    0\;\, &\text{ for } y\in Q_Y\text{ (exterior)},\\
    \Phi(y_1)&\text{ for }  y\in K_Y^\eps \text{ (channel)},
\end{cases}
\end{split}
\end{align}
where $L$ is the length of the channel and $\Phi(\cdot)$ is affine on
the interval $(y_R,y_Q)$ with $\Phi(y_R) = L$ and $\Phi(y_Q) = 0$. In
particular, $\Phi_\eps\in H^1(Y\setminus \bar\Sigma^\eps_Y)$ with
\begin{align*}
  \nabla_y\Phi_\eps(y)=- e_1\,{\bf 1}_{K_Y^\eps}(y)\,,
\end{align*}
where $e_1$ denotes the first unit vector. We note that the
oscillatory test function $\varphi_\eps$ is of class
$H^1(\Omega_\eps)$, since the jump set $\partial Y^\eps$ of the
function ${\bf 1}_{Y^\eps}$ is contained in the set where
$\Phi_\eps\left(\frac{\cdot}{\eps}\right)$ vanishes.  We now multiply
the Helmholtz equation \eqref{eq:Helmholtzeps} with $\varphi_\eps$ and
integrate by parts to find
\begin{align}
  \label{eq:test1}
  \int_{\Omega_\eps}\nabla u_\eps(x)\cdot\nabla
  \varphi_\eps(x)\,dx=\omega^2\int_{\Omega_\eps}
  u^\eps(x)\varphi_\eps(x)\,dx\,.
\end{align} 
Concerning the left hand side of \eqref{eq:test1} we calculate
\begin{align*}
  &\int_{\Omega_\eps}\nabla u_\eps(x)\cdot\nabla \varphi_\eps(x)\,dx\\
  &\qquad =
  \int_{\Omega_\eps}\nabla u_\eps(x)\cdot\nabla\theta(x)\Phi_\eps\left(\frac{x}{\eps}\right){\bf 1}_{Y^\eps}(x)\,dx -\frac{1}{\eps} \int_{K_\eps}\partial_{x_1}u^\eps(x)\theta(x)\,dx\\
  &\qquad =L\int_{R_\eps}\nabla u_\eps(x)\cdot\nabla\theta(x)\,dx +
  \int_{K_\eps}\nabla
  u_\eps(x)\cdot\nabla\theta(x)\Phi_\eps\left(\frac{x}{\eps}\right)\,dx\\
  &\qquad\qquad -\frac{1}{\eps}
  \int_{K_\eps}\partial_{x_1}u^\eps(x)\theta(x)\,dx\,.
\end{align*}
In the limit $\eps\rightarrow 0$, the first term vanishes due to
\eqref{eq:charactxi2}. The second term vanishes due to the uniform
bound \eqref{eq:uniformboundimplied} and the vanishing volume fraction
of the channels.  In the third term we use the definition of $j_*$ in 
\eqref{eq:currentdef}. Together, we find
\begin{align*}
  \int_{\Omega_\eps}\nabla u_\eps(x)\cdot\nabla
  \varphi_\eps(x)\,dx
  \quad \stackrel{\eps\rightarrow 0}{\rightarrow}\quad
  L\int_{\bar\Omega} \theta(x)\,dj_*(x)\,.
\end{align*}
Concerning the right hand side of \eqref{eq:test1} we calculate, using
the properties of the microscopic test-function $\Phi_\eps$,
\begin{align*}
  &\omega^2\int_{\Omega_\eps} u^\eps(x)\varphi_\eps(x)\,dx
  =L\omega^2\int_{R_\eps}u^\eps(x)\theta(x)\,dx
  + \omega^2\int_{K_\eps}u^\eps(x)\varphi_\eps(x)\,dx\\
  &\qquad \stackrel{\eps\rightarrow 0}{\rightarrow}
  L\omega^2\int_{D}\int_{R_Y}w(x)\theta(x)\,dy\,dx
  =LV\omega^2\int_{D}w(x)\theta(x)\,dx\,.
\end{align*}
In the second line we again exploited the vanishing volume fraction of
the channels and characterization \eqref{eq:charactvnew} of the
two-scale limit $u_0$. 

Our calculations show that \eqref{eq:test1} provides, in the limit
$\eps\to 0$ and upon dividing by $L$, relation \eqref
{eq:weakformj}. This concludes the proof of the proposition.
\end{proof}

\subsection{Geometric flow rule: A second relation for the current}

In this section we establish the geometric flow rule, which relates
the current $j_*$ to the difference between the exterior field $v$ and
the interior field $w$.

\begin{proposition}[Geometric flow rule]
  \label{prop:geometricflow}
  In the situation of Theorem \ref{thm:main-2}, let $j_*=j\,dx$ be the
  current from Definition \ref{def:current}, let $w$ be the interior
  field and $v$ the exterior field of Proposition
  \ref{prop:charactu0}.  Then there holds, for $x\in D$,
  \begin{align}
    \label{eq:geometricflow}
    j(x)=-\frac{A}{L}(v(x)-w(x))\,.
  \end{align}
\end{proposition}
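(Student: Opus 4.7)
The flow rule $j=-(A/L)(v-w)$ reflects the fact that inside each channel $K_k^\eps$ — a cylinder of length $L\eps$ and cross-sectional radius $O(\eps^{p+1})$ — the solution $u^\eps$ is essentially a one-dimensional affine profile interpolating between the value $w(x_k)$ at the $R$-end and $v(x_k)$ at the $Q$-end. Indeed, after cross-sectional averaging the Helmholtz equation becomes $-U''=\omega^2U$ on an interval of length $L\eps\ll 1/\omega$, so $U$ is essentially affine with slope $(v(x_k)-w(x_k))/(L\eps)$. Inserted into the definition of $j^\eps$, together with $|K_k^\eps|=LA\eps^{n+2}$ and $|\mathcal{K}|=O(\eps^{-n})$, this heuristic already yields the claimed limit. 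Note that a purely test-function attempt using $\varphi_\eps(x)=\theta(x)\Phi_\eps(x/\eps)$ with $\Phi_\eps$ constant on $R_Y$ and $Q_Y$ and affine on the channel merely reproduces Proposition \ref{prop:1} and the effective equation for $v$, but not the geometric flow rule: a genuine channel analysis is unavoidable.

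To implement the idea I would fix $\theta\in C_c^\infty(\Omega)$ and apply the divergence theorem to the vector field $\theta(x)u^\eps(x)e_1$ inside each cylinder $K_k^\eps$. Since the lateral walls have normal orthogonal to $e_1$, only the two end faces $\Gamma_k^{\eps,R}$ and $\Gamma_k^{\eps,Q}$ contribute and
\begin{equation*}
\int_{K_k^\eps}\theta\,\partial_{x_1}u^\eps
\;=\;\int_{\Gamma_k^{\eps,Q}}\theta u^\eps-\int_{\Gamma_k^{\eps,R}}\theta u^\eps-\int_{K_k^\eps}u^\eps\,\partial_{x_1}\theta.
\end{equation*}
The Sobolev embedding $H^1(\Omega_\eps)\hookrightarrow L^4(\Omega_\eps)$ combined with H\"older gives $\|u^\eps\|_{L^2(K_\eps)}=O(\eps^{1/2})$, so the bulk correction summed over $k$ is of order $\eps^{3/2}$ and vanishes after division by $L\eps$. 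It remains to identify the limits of the face-trace sums. Exploiting the flatness of $\del Q_Y$ and $\del R_Y$ near the channel entrance (Section \ref{sec.geometry}) and a Poincar\'e-trace estimate on a cap of $Q_k^\eps$, resp.\ $R_k^\eps$, adjacent to $\Gamma_k^{\eps,Q}$, resp.\ $\Gamma_k^{\eps,R}$, one replaces each face integral by the corresponding cell-bulk average of $u^\eps$ with an error whose squared sum over cells is controlled by the uniform $H^1$-bound on $u^\eps$.

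By Proposition \ref{prop:charactu0} the resulting cell-bulk averages converge, as Riemann sums in $x_k$, to $\int_D\theta v$ on the $Q$-side and $\int_D\theta w$ on the $R$-side. Using $|\Gamma_k^{\eps,Q}|=|\Gamma_k^{\eps,R}|=A\eps^{n+1}$ and the identity $(L\eps)^{-1}A\eps^{n+1}=(A/L)\eps^n$ — exactly the Riemann-sum weight — the difference of the two sums gives
\begin{equation*}
\int_\Omega\theta\,dj^\eps\;\xrightarrow{\eps\to 0}\;-\frac{A}{L}\int_D\theta\,(v-w),
\end{equation*}
and Proposition \ref{prop:1} identifies the density $j(x)=-(A/L)(v(x)-w(x))$ a.e.\ on $D$. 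The main obstacle is the trace-to-cap replacement in the previous step: the end faces are $(n-1)$-dimensional discs of radius $O(\eps^{p+1})$, which degenerates strictly faster than the cell scale $\eps$, and the Poincar\'e constants must be invoked with the correct anisotropic scaling so that the cumulative trace error after summation over the $O(\eps^{-n})$ cells genuinely remains $o(L\eps)$.
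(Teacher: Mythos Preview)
Your argument is structurally close to the paper's: both reduce to the face-trace integrals $\eps^{-1}\int_{\Gamma_R^\eps}u^\eps$ and $\eps^{-1}\int_{\Gamma_Q^\eps}u^\eps$ and must identify their limits. Your opening remark --- that the test-function $\Phi_\eps$ can only reproduce Proposition~\ref{prop:1} --- is in fact wrong: the paper uses exactly this $\Phi_\eps$ (localized by a sharp cut-off $\Theta_E^\eps$) and computes $B_\eps:=\int_{\Omega_\eps}\nabla u^\eps\cdot\nabla\varphi_\eps$ in \emph{two} ways: once directly, giving $L\int_E j$, and once by integration by parts using $\Delta\varphi_\eps=0$ piecewise, giving the interface traces. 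Testing the Helmholtz equation with $\varphi_\eps$ yields Proposition~\ref{prop:1}; computing this one integral in two ways yields the flow rule. Your divergence-theorem identity on each $K_k^\eps$ is simply the cell-by-cell version of the second computation.

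The genuine gap is the step you yourself flag as the ``main obstacle'': the trace-to-bulk replacement. A Poincar\'e--trace inequality based only on the uniform $H^1$ bound cannot close this. In the unit cell the interface $\Gamma_Y^{\eps,R}$ has $(n{-}1)$-measure of order $\eps^2$ (in both cases $n=2,\,p=2$ and $n=3,\,p=1$), and since $H^1$ does not embed into $C^0$ for $n\ge 2$, the average of a merely $H^1$-function over such a shrinking patch is not controlled uniformly in $\eps$ by the $H^1$ norm; the relevant constant blows up, and after summation over $O(\eps^{-n})$ cells the error does not vanish. The paper closes exactly this gap by \emph{using the equation}: it forms the cell-averaged function $U_\eps(y)=M_\eps^{-1}\sum_k u^\eps(\eps(k+y))$, which solves $-\Delta U_\eps=\omega^2\eps^2 U_\eps$ on $Y\setminus\bar\Sigma_Y^\eps$ with homogeneous Neumann data, and then exploits the flatness of $\partial R_Y$ and $\partial Q_Y$ near the channel ends (assumed in Section~\ref{sec.geometry}) to derive local $H^2$ bounds (Lemma~\ref{lem:averages-inlet}). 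The embedding $H^2\hookrightarrow C^0$ then gives \emph{uniform} convergence $U_\eps|_{R_Y}\to\meanint_E w$, which survives the shrinking interface average. Without this extra regularity from the Helmholtz equation the trace step does not close, and your explicit decision to set aside the equation-based route leaves precisely this hole open.
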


We call \eqref{eq:geometricflow} a geometric flow rule, since it
establishes a relation between a suitable average of gradients
$\del_{x_1} u^\eps$ on the left hand side with an averaged slope
$(v-w)/L$, that is to be expected by the values near the end-points of
the channel.

The geometric flow rule together with Proposition \ref{prop:1}
provides already a relation between the exterior field $v$ and the
interior field $w$.

\begin{remark}
  Combining the flux relation \eqref{eq:relationjw} with the geometric
  flow rule \eqref{eq:geometricflow} we obtain that, for almost every
  $x\in D$:
  \begin{align}
    \label{eq:relationvw}
    -\frac{A}{VL}v(x)&=\left(\omega^2-\frac{A}{VL}\right)w(x)\,.
  \end{align}
  In particular, resonances of the system occur for frequencies
  $\omega^2$ that are close to the ratio $A/(VL)$.
\end{remark}

\begin{proof}[Proof of the geometric flow rule, Proposition
  \ref{prop:geometricflow}]
  We will prove that for arbitrary Lipschitz domains $E\subset D$ the
  limit measure $j_* = j\, dx$ satisfies
  \begin{equation}
    \label{eq:Aim-geometric}
    \int_E\, j(x)\,dx=-\frac{A}{L}\int_E(v(x)-w(x))\,dx\,.
  \end{equation}
  Once this is shown, we have verified \eqref {eq:geometricflow} and
  hence the proposition. We will use essentially the same
  test-function as in the proof of Proposition \ref{prop:1}. But while in Proposition
  \ref{prop:1} we concluded results from the Helmholtz equation, we use the test-functions here in a more elementary
  way: We want to compare values of functions with averages of
  derivatives.
	
  \smallskip {\em Step 1. The two-scale test function.} We use the
  microscopic test-function $\Phi_\eps:Y\setminus \bar\Sigma_Y^\eps \to
  \R$ that was defined in \eqref{eq:testfctaffine}. In particular,
  $\Phi_\eps\in H^1(Y\setminus \bar\Sigma^\eps_Y)$ with
  \begin{align*}
    \nabla_y\Phi_\eps(y) = - e_1\,{\bf 1}_{K_Y^\eps}(y)\,.
  \end{align*}
  To construct a macroscopic test-function, we define $I_\eps := \{
  k\in \Z^n | Y_k^\eps \subset E\}$, the set of all indices $k$ such
  that the cell $Y_k^\eps$ is contained in the test-set $E\subset
  D$. The number of elements of $I_\eps$ is of order $M_\eps :=
  |I_\eps| = O(\eps^{-n})$. As a slightly smaller test-set we use
  $Y_E^{\eps} := \bigcup_{k\in I_\eps} Y_k^\eps \subset E$, the union
  of $\eps$-cells that are contained in $E$.  With the characteristic
  function $\Theta^\eps_E:\Omega\to [0,1]$, $\Theta^\eps_E(x) = 1$ for
  $x\in Y_E^{\eps}$ and $\Theta^\eps_E(x)=0$ for $x\not\in
  Y_E^{\eps}$, we set
  \begin{equation}
    \label{eq:test-fct-fhi}
    \fhi_\eps(x) := \Theta^\eps_E(x) \Phi_\eps\left( \frac{x}{\eps} \right)\,.
  \end{equation}
  We note that $\fhi_\eps$ is of the class $H^1$, since the jump set $\del Y_E^\eps$ of the function $\Theta^\eps_E$ is contained in
  the set where the function $\Phi_\eps\left( \frac{\cdot}{\eps}\right)$
  vanishes.

  \smallskip {\em Step 2.  First calculation of $B_\eps$.} The proof
  of the proposition consists of calculating, in two different ways,
  the following expression $B_\eps$:
  \begin{align*}
    B_\eps &:= \int_{\Omega_\eps} \nabla u^\eps(x)\cdot \nabla \fhi_\eps(x)\,dx\,.
  \end{align*}
  On the one hand, we can calculate with the definition of $j_*=j\,dx$:
  \begin{align*}
    B_\eps &= \frac{1}{\eps}\int_{Y_E^\eps\cap R_\eps} 
		\nabla u^\eps(x)\cdot \nabla_y \Phi_\eps\left(\frac{x}{\eps}\right)\,dx +
    \frac{1}{\eps}\int_{Y_E^\eps\cap K_\eps} \nabla u^\eps(x)\cdot \nabla_y \Phi_\eps\left(\frac{x}{\eps}\right)\,dx\\
    &\qquad + \frac{1}{\eps}\int_{Y_E^\eps\cap Q_\eps} \nabla u^\eps(x)\cdot \nabla_y \Phi_\eps\left(\frac{x}{\eps}\right)\,dx\\
    &= -\frac{1}{\eps} \int_{Y_E^\eps\cap K_\eps} \del_1 u^\eps(x)\,dx
    \to L \int_E j(x)\, dx\,.
  \end{align*}
  In the first equality we decomposed the integral and in the second
  equality we exploited that $\nabla_y\Phi_\eps$ is either $0$ or
  $-e_1$. In the last step we used the definition of the
  limit measure $j_* = j\, dx$ and the fact that the volume of
  $E\setminus Y^\eps_E$ vanishes in the limit $\eps\rightarrow 0$.

  \smallskip {\em Step 3. Second calculation of $B_\eps$.}  In order
  to prepare the second calculation of $B_\eps$ (which is based on an
  integration by parts), we have to define the interface sets: We
  recall that the channel in the periodicity cell is $K_Y^\eps =
  (y_R,y_Q) \times B^{n-1}_{\alpha\eps^p}(0) \subset Y$, the interface
  to the inner set $R_Y$ is therefore $\Gamma_Y^{\eps,R} := \bar
  K_Y^\eps \cap \bar R_Y = \{y_R\}\times
  \overline{B^{n-1}_{\alpha\eps^p}(0)}$. Analogously, the interface to the outer
  set $Q_Y$ is $\Gamma_Y^{\eps,Q} := \bar K_Y^\eps \cap \bar Q_Y =
  \{y_Q\}\times \overline{B^{n-1}_{\alpha\eps^p}(0)}$.

  The function $\Phi_\eps$ has the gradients $0$ and $-e_1$ on the two
  sides of $\Gamma_Y^{\eps,R}$, and it has the gradients $-e_1$ and
  $0$ on the two sides of $\Gamma_Y^{\eps,Q}$.  Therefore, the jumps
  (always right trace minus left trace) are given by
  \begin{equation}
    \label{eq:jump-Phi}
    \left[ \nabla \Phi_\eps \right]_{\Gamma_Y^{\eps,R}} = - e_1\,,\qquad
    \left[ \nabla \Phi_\eps \right]_{\Gamma_Y^{\eps,Q}} =  e_1\,.
  \end{equation}

  We now calculate the number $B_\eps$ by performing an integration by
  parts in all the three integrals, $Y_E^\eps\cap R_\eps$,
  $Y_E^\eps\cap Q_\eps$, and $Y_E^\eps\cap K_\eps$.  Since the
  test-function is harmonic, $\Delta \fhi_\eps=0$, in all the three sets,
  we obtain, denoting by $\Gamma_R^\eps:=\bigcup_{k\in I_\eps}\eps(k+
  \Gamma_Y^{\eps,R})$ and by $\Gamma_Q^\eps:=\bigcup_{k\in
    I_\eps}\eps(k+\Gamma_Y^{\eps,Q})$ the union of interfaces,
  \begin{align*}
    B_\eps &= - \int_{\Gamma^\eps_R} u^\eps\ e_1\cdot
    \left[ \nabla \fhi_\eps \right]_{\Gamma^\eps_R}\, d\calH^{n-1} -
    \int_{\Gamma^\eps_Q} u^\eps\ e_1\cdot
    \left[ \nabla \fhi_\eps \right]_{\Gamma^\eps_Q}\, d\calH^{n-1}\\
    &= \frac1{\eps} \int_{\Gamma^\eps_R} u^\eps\, d\calH^{n-1} -
    \frac1{\eps} \int_{\Gamma^\eps_Q} u^\eps \, d\calH^{n-1}\,.
  \end{align*}
  In the remainder of this proof we want to compare the first integral
  with the values of $u^\eps$ inside the resonators, i.e.\,with $w$,
  and the second integral with the values outside the resonators,
  i.e.\,with $v$. More precisely, we claim that
  \begin{equation}
    \label{eq:claim-geom-lim}
    \frac1{\eps} \int_{\Gamma^\eps_R} u^\eps\, 
    d\calH^{n-1} \to A \int_E w\qquad\text{and}\qquad
    \frac1{\eps} \int_{\Gamma^\eps_Q} u^\eps\, 
    d\calH^{n-1} \to A \int_E v
  \end{equation}
  as $\eps\to 0$. Once \eqref {eq:claim-geom-lim} is shown, the proof
  of the proposition is complete: In combination with Step 2 we have
  found
  \begin{align*}
    L \int_E j(x)\, dx = \lim_{\eps\to 0} B_\eps = A \int_E (w -
    v)(x)\, dx
  \end{align*}
  and hence \eqref {eq:Aim-geometric}.

  \smallskip {\em Step 4. Verification of \eqref {eq:claim-geom-lim}.}
  In order to verify the limits \eqref {eq:claim-geom-lim} we want to
  construct an averaged function, rescale and use Lemma \ref
  {lem:averages-inlet} in the appendix.  We consider the following
  function $U_\eps$ on $Y\setminus \bar\Sigma_Y^\eps$ (we recall that
  $M_\eps = |I_\eps|$ denotes the number of considered cells)
  \begin{equation}
    \label{eq:U-eps}
    U_\eps(y) := \frac1{M_\eps} \sum_{k\in I_\eps} u^\eps(\eps (k+y))\,.
  \end{equation}
  Linearity of the Helmholtz equation implies that the rescaled
  function $U_\eps$ satisfies $-\Delta U_\eps = \omega^2 \eps^2
  U_\eps$ on $Y\setminus \Sigma_Y^\eps$. The (weak) two-scale
  convergence of $u^\eps(x)$ to $u_0(x,y)$ with $u_0(x,y) = w(x)$ for
  $y\in R_Y$ and $u_0(x,y) = v(x)$ for $y\in Q_Y$ implies the weak
  $L^2$-convergence of $U^\eps$ to constant functions,
  \begin{equation}
    \label{eq:U-convergence-23}
    U_\eps|_{R_Y} \weakto \mean_E w\qquad\text{and}\qquad
    U_\eps|_{Q_Y} \weakto \mean_E v\,.
  \end{equation}
  Since also the homogeneous Neumann boundary conditions on $\partial\Sigma_Y^\eps$ are
  satisfied, the sequence $U_\eps$ satisfies all assumptions of Lemma
  \ref {lem:averages-inlet} in the appendix.  Assertion \eqref
  {eq:result-lemma} of the lemma provides
  \begin{align}\label{eq:assertion-lemma-used}
    \mean_{\Gamma_Y^{\eps,R}} U_\eps(y)\,d\calH^{n-1}(y) \to \mean_E w
    \quad \text{ and }\quad
    \mean_{\Gamma_Y^{\eps,Q}} U_\eps(y)\,d\calH^{n-1}(y) \to \mean_E v\,.
  \end{align}

  It remains to relate inlet averages of $U_\eps$ (which live in the
  unit cell $Y$) with inlet averages of $u^\eps$ (which live on
  $\Omega$).  With the number of resonators in $E$ satisfying
  $M_\eps \eps^n \to |E|$ we calculate
  \begin{align*}
    \mean_{\Gamma_Y^{\eps,R}} U_\eps(y)\,d\calH^{n-1}(y) &= \mean_{\Gamma_Y^{\eps,R}}
    \frac1{M_\eps} \sum_{k\in I_\eps} u^\eps(\eps
    (k+y))\,d\calH^{n-1}(y)\\
    &= \frac{1}{A\eps^2} \frac1{M_\eps} \sum_{k\in I_\eps}
    \int_{\Gamma_Y^{\eps,R}} u^\eps(\eps (k+y))\,d\calH^{n-1}(y)\\
    &= \frac1{A\eps^2} \frac1{M_\eps} \frac1{\eps^{n-1}}
    \int_{\Gamma^\eps_R} u^\eps(x)\,d\calH^{n-1}(x)\\
    &=\left(\frac1{A |E| \eps} + \frac{o(1)}{\eps}\right)
    \int_{\Gamma^\eps_R} u^\eps(x)\,d\calH^{n-1}(x) \,.
  \end{align*}
  The convergence result \eqref{eq:assertion-lemma-used} thus implies
  \begin{align*}
    \frac1{A \eps} \int_{\Gamma^\eps_R} u^\eps(x)\,d\calH^{n-1}(x)
    \to \int_E w(x)\,dx\,,
  \end{align*}
  as $\eps\to 0$, which was the first claim in \eqref
  {eq:claim-geom-lim}. The second claim follows analogously from the
  second convergence in \eqref{eq:assertion-lemma-used}.
\end{proof}

\subsection{Effective equation}

In this section we derive the effective equation for the exterior
field $v$. We recall that we denoted interior and exterior volume as
$V=|R_Y|$ and $Q=|Q_Y|$.

\begin{proposition}[Effective equation for the exterior field]
  \label{prop:effectiveeq}
  In the situation of Theorem \ref{thm:main-2}, let $v$ be the
  exterior field and $w$ the interior field of Proposition
  \ref{prop:charactu0}. Then there holds
  \begin{align}
    \label{eq:effectiveeqvw}
    -\nabla\cdot\left(A_{*}\nabla v\right) =
    \Xi_{v,w}\quad\text{in }\Omega
  \end{align}
  in the sense of distributions with the effective coefficient matrix
  $A_{*}$, see \eqref{eq:effectiveA}. On the right hand side, we used
  the abbreviation
  \begin{align}
    \begin{split}
      \label{eq:Xivw}
      \Xi_{v,w}(x) :=
      \begin{cases}
        v(x)& \;\;\text{ for } x\in \Omega\setminus D, \\
        Qv(x)+Vw(x)& \;\; \text{ for } x\in D.
      \end{cases}
    \end{split}
  \end{align}
\end{proposition}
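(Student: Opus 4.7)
The natural strategy is to test the microscopic equation \eqref{eq:Helmholtzeps} against a smooth macroscopic test function $\theta\in C_c^\infty(\Omega)$ and pass to the two-scale limit using the characterizations of $u_0$ and $\xi_0$ already obtained in Proposition \ref{prop:charactu0} and Lemma \ref{lem:twoscalegradients}. Multiplying $-\Delta u^\eps=\omega^2 u^\eps$ by $\theta$ and integrating by parts on $\Omega_\eps$, the homogeneous Neumann condition on $\partial\Sigma_\eps$ eliminates the internal boundary terms, and since $\theta$ has compact support in $\Omega$ the boundary terms on $\partial\Omega$ vanish as well, leaving
\begin{equation*}
\int_{\Omega_\eps}\nabla u^\eps\cdot\nabla\theta\,dx=\omega^2\int_{\Omega_\eps}u^\eps\,\theta\,dx\,.
\end{equation*}

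For the left-hand side I would exploit the two-scale limit $\xi_0$ of the zero extension $\xi^\eps$ of $\nabla u^\eps$: testing with the $y$-independent profile $\nabla\theta(x)$ yields the limit $\int_\Omega\int_Y\xi_0(x,y)\cdot\nabla\theta(x)\,dy\,dx$. Splitting the $x$-integration by region: on $\Omega\setminus D$ Lemma \ref{lem:twoscalegradients} gives $\xi_0\equiv\nabla v(x)$, contributing $\int_{\Omega\setminus D}\nabla v\cdot\nabla\theta\,dx$; on $D$ we have $\xi_0=\nabla_x v+\nabla_y v_1$ on $Q_Y$ and $\xi_0=0$ elsewhere, and substituting the cell representation $v_1(x,y)=\sum_i\chi_i(y)\,\partial_{x_i}v(x)$ and integrating over $Q_Y$ identifies the limit contribution, using the definition \eqref{eq:effectiveA} and the symmetry of $A_{\mathrm{eff}}$, with $\int_D A_{\mathrm{eff}}\nabla v\cdot\nabla\theta\,dx$. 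Hence the LHS converges to $\int_\Omega\nabla\theta\cdot A_{*}\nabla v\,dx$.

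For the right-hand side, Proposition \ref{prop:charactu0} and the two-scale convergence of $\tilde{u^\eps}$ tested against the $y$-independent profile $\theta(x)$ give
\begin{equation*}
\omega^2\int_{\Omega_\eps}u^\eps\,\theta\,dx\;\longrightarrow\;\omega^2\int_\Omega\int_Y u_0(x,y)\,dy\,\theta(x)\,dx\,.
\end{equation*}
Since $|Y|=1$, the inner $y$-integral equals $v(x)$ on $\Omega\setminus D$, and equals $Q\,v(x)+V\,w(x)$ on $D$, because $u_0$ equals $v$ on $Q_Y$, $w$ on $R_Y$ and $0$ on $\Sigma_Y$. This is precisely $\Xi_{v,w}(x)$ as defined in \eqref{eq:Xivw}. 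Equating the two limits for every $\theta\in C_c^\infty(\Omega)$ then gives the weak formulation of the distributional identity \eqref{eq:effectiveeqvw} (with the natural $\omega^2$ prefactor on the right-hand side).

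I do not expect any substantive obstacle: all the hard analytic work has already been carried out, so the proposition reduces to a single macroscopic two-scale passage to the limit. The only point requiring care is the index bookkeeping when identifying $\int_{Q_Y}[\,e_i+\nabla_y\chi_i\,]\,dy$ with the $i$-th column of $A_{\mathrm{eff}}$; this uses only the symmetry of $A_{\mathrm{eff}}$, which follows from the cell problem \eqref{eq:cellproblemw} by an integration by parts.
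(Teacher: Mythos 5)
Your proposal is correct and follows essentially the same route as the paper: test the weak form of \eqref{eq:Helmholtzeps} with $\theta\in C_c^\infty(\Omega)$, pass to the two-scale limit on the left using Lemma \ref{lem:twoscalegradients} together with the representation \eqref{eq:representv1} to produce $A_{*}$, and on the right using Proposition \ref{prop:charactu0} to produce $\Xi_{v,w}$. Your parenthetical remark about the $\omega^2$ prefactor is also consistent with what the paper actually derives in its proof.
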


\begin{proof}
  Let $\theta\in C_c^\infty(\Omega)$ be arbitrary. The weak form of
  the Helmholtz equation \eqref{eq:Helmholtzeps} provides
  \begin{align}
    \label{eq:weakformHelmholtz}
    \int_{\Omega_\eps}\nabla
    u^\eps(x)\cdot\nabla\theta(x)\,dx
    =\omega^2\int_{\Omega_\eps}u^\eps(x)\theta(x)\,dx\,.
  \end{align}
  On the left hand side of \eqref{eq:weakformHelmholtz} we can
  directly pass to the limit $\eps\rightarrow 0$. Exploiting the
  characterization from Lemma \ref{lem:twoscalegradients} one obtains
  \begin{align*}
    &\int_{\Omega_\eps}\nabla u^\eps(x)\cdot\nabla_x\theta(x)\,dx\\
    &\stackrel{\eps\rightarrow 0}{\rightarrow}
    \int_\Omega\int_Y \xi_0(x,y)\cdot\nabla_x\theta(x)\,dy\,dx\\
    &=\int_D\int_{Q_Y}\left[\nabla_x v(x) + \nabla_y
      v_1(x,y)\right]\cdot\nabla_x\theta(x)\,dy\,dx
    +\int_{\Omega\setminus D}\nabla_x v(x)\cdot\nabla_x\theta(x)\,dx\\
    &=\int_D\int_{Q_Y} \left[\nabla_x v(x)+\sum_{i=1}^n\nabla_y
      \chi_i(y)\partial_{x_i}v(x)\right]\cdot\nabla_x\theta(x)\,dy\,dx\\
    &\qquad\qquad 
    +\int_{\Omega\setminus D}\nabla_x v(x)\cdot\nabla_x\theta(x)\,dx\\
    &= \int_D A_{\eff} \nabla_x v(x)\cdot\nabla_x \theta(x)\,dx
    +\int_{\Omega\setminus D}\nabla_x v(x)\cdot\nabla_x\theta(x)\,dx\\
    &=\int_\Omega A_{*}(x)\nabla_x v(x)\cdot\nabla_x \theta(x)\,dx\,,
  \end{align*}
  where in the fourth line we exploited the representation formula
  \eqref{eq:representv1} for $v_1$ and in the last two lines the
  definitions of $A_\eff$ and $A_{*}$ from \eqref{eq:effectiveA}.
  
  On the right hand side of \eqref{eq:weakformHelmholtz} we use the
  characterization from Proposition \ref{prop:charactu0} to find, in
  the limit $\eps\rightarrow 0$,
  \begin{align*}
    &\omega^2\int_{\Omega_\eps}u^\eps(x)\theta(x)\,dx\\
    &\rightarrow\, \omega^2\int_\Omega\int_Y u_0(x,y)\theta(x)\,dy\,dx\\
    &=\,\omega^2\int_D\left(\int_{Q_Y}v(x)\theta(x)\,dy +
      \int_{R_Y}w(x)\theta(x)\,dy\right)\,dx
    +\omega^2\int_{\Omega\setminus D}v(x)\theta(x)\,dx\\
    &=\,\omega^2\int_D\left(Qv(x)+Vw(x)\right)\theta(x)\,dx
    +\omega^2\int_{\Omega\setminus D}v(x)\theta(x)\,dx\\
    &=\,\omega^2\int_\Omega \Xi_{v,w}(x)\theta(x)\,dx\,.
  \end{align*}
  To sum up, we obtain
  \begin{align*}
    \int_\Omega A_{*}(x)\nabla_x v(x)\cdot\nabla_x
    \theta(x)\,dx=\omega^2\int_\Omega\Xi_{v,w}(x)\theta(x)\,dx\,.
  \end{align*}
  Since $\theta$ was arbitrary, this provides the claim \eqref
  {eq:effectiveeqvw}.
\end{proof}

\begin{proof}[Proof of Theorem \ref{thm:main-2}]
  Theorem \ref{thm:main-2} is a direct consequence of Proposition
  \ref{prop:effectiveeq}.  Indeed, by exploiting the Relation
  \eqref{eq:relationvw} between $v$ and $w$ one immediately obtains,
  for $x\in\Omega$,
  \begin{align*}
    \Xi_{v,w}(x)=\Lambda(x)v(x)\,.
  \end{align*}
  The effective equation \eqref{eq:effectiveeqvw} is therefore
  identical to the one of Theorem \ref{thm:main-2}.
\end{proof}

\appendix

\section{Averages on channel interfaces}
The proof of Proposition \ref{prop:geometricflow} is based on the
following auxiliary lemma. It shows that, in the limit $\eps\to 0$,
averages on interfaces coincide with bulk averages --- on the volume
side of the interface.

\begin{lemma}[Averages on thin channel
  interfaces]\label{lem:averages-inlet}
  We consider the obstacle $\Sigma_Y^\eps\subset Y$ that separates,
  inside the unit cell, the resonator $R_Y$ from the outer domain
  $Q_Y$, but leaves a thin channel $K_Y^\eps$ that connects $R_Y$ and
  $Q_Y$ (as introduced in Section \ref{sec.geometry}). Let $U_\eps :
  Y\setminus \Sigma_Y^\eps$ be a sequence of $H^1$-functions that is
  $L^2$-bounded and that solves the Helmholtz equation
  \begin{align}
    \label{eq:ass-lemma-1}
    -\Delta U_\eps &= \omega^2 \eps^2 U_\eps\quad\ \,
    \text{ in } Y\setminus \Sigma_Y^\eps\,,\\
    \del_n U_\eps &= 0\qquad\qquad \text{ on } \del\Sigma_Y^\eps\,.
    \label{eq:ass-lemma-2}
  \end{align}
  Note that we impose no boundary condition on $\del Y$. The only
  additional assumption is, for two numbers $\xi_R, \xi_Q\in \R$, the
  weak convergence
  \begin{align}
    \label{eq:ass-lemma-3}
    U_\eps|_{R_Y} &\weakto \xi_R\qquad \text{ in } L^2(R_Y)\,,\\
    U_\eps|_{Q_Y} &\weakto \xi_Q\qquad \text{ in } L^2(Q_Y)\,,
    \label{eq:ass-lemma-4}
  \end{align}
  as $\eps\to 0$.  We denote the interface between $R_Y$ and channel
  $K_Y^\eps$ by $\Gamma_Y^{\eps,R} := \bar R_Y \cap \bar K_Y^\eps =
  \{y_R\} \times \overline{B_{\alpha\eps^p}^{n-1}}$ and, accordingly, the outer
  interface by $\Gamma_Y^{\eps,Q} := \bar Q_Y \cap \bar K_Y^\eps =
  \{y_Q\} \times \overline{B_{\alpha\eps^p}^{n-1}}$. Under the above assumptions
  we obtain, in the limit $\eps\to 0$, the convergence of interface
  averages:
  \begin{align}\label{eq:result-lemma}
    \mean_{\Gamma_Y^{\eps,R}} U_\eps(y)\,d\calH^{n-1}(y) \to \xi_R
    \qquad \text{ and }\qquad
    \mean_{\Gamma_Y^{\eps,Q}} U_\eps(y)\,d\calH^{n-1}(y) \to \xi_Q\,.
  \end{align}
\end{lemma}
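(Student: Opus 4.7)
I will prove the first convergence in \eqref{eq:result-lemma}; the second follows by the same argument applied at $(y_Q,0)$. The approach is a compactness / blow-up at the interface point $(y_R,0)$, exploiting the local flatness of $\partial R_Y$ near this point together with the Neumann condition \eqref{eq:ass-lemma-2}.

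First, I derive a uniform local $H^1$-bound via Caccioppoli. Fix $\delta\in(0,L/2)$ so that inside $B_\delta((y_R,0))$ the boundary $\partial\Sigma_Y^\eps$ reduces to the flat piece of $\partial R_Y$ and to the lateral walls of the channel $K_Y^\eps$. Choose a cutoff $\eta\in C_c^\infty(B_\delta((y_R,0)))$ with $\eta\equiv 1$ on $B_{\delta/2}$, and test \eqref{eq:ass-lemma-1} against $\eta^2 U_\eps$ on $B_\delta\cap(Y\setminus\Sigma_Y^\eps)$; the Neumann data \eqref{eq:ass-lemma-2} and the compact support of $\eta$ kill all boundary contributions, and a standard Caccioppoli computation yields $\|\nabla U_\eps\|_{L^2(B_{\delta/2}\cap(Y\setminus\Sigma_Y^\eps))}\leq C$. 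Rellich compactness combined with the weak convergence \eqref{eq:ass-lemma-3} then gives, along a subsequence, $U_\eps\to\xi_R$ strongly in $L^2(B_{\delta/2}\cap R_Y)$.

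Next, I perform an anisotropic blow-up at scale $\eps^p$: set $\zeta:=(y-(y_R,0))/\eps^p$ and $V_\eps(\zeta):=U_\eps((y_R,0)+\eps^p\zeta)$. As $\eps\to 0$ the rescaled domain converges to
\begin{align*}
\Omega_\infty \,:=\, \{\zeta_1<0\}\,\cup\,\Gamma^\alpha\,\cup\,\{\zeta_1>0,\,|\zeta'|<\alpha\}, \quad \Gamma^\alpha:=\{\zeta_1=0,\,|\zeta'|<\alpha\},
\end{align*}
that is, a half-space smoothly joined through the disk $\Gamma^\alpha$ to a semi-infinite cylinder. The rescaling of \eqref{eq:ass-lemma-1}--\eqref{eq:ass-lemma-2} produces $-\Delta V_\eps=\omega^2\eps^{2+2p}V_\eps$ with homogeneous Neumann data on the rescaled obstacle. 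The estimate of the previous step, combined with interior and boundary elliptic regularity for the Helmholtz equation at the flat Neumann part of $\partial R_Y$ near $(y_R,0)$, transfers into uniform $H^1_{\mathrm{loc}}(\Omega_\infty)$-bounds of $V_\eps$ and into strong $L^2$-convergence of $V_\eps$ to the constant $\xi_R$ on every compact subset of $\{\zeta_1<0\}$. Extracting a subsequence, $V_\eps\weakto V_\infty$ weakly in $H^1_{\mathrm{loc}}(\Omega_\infty)$, where $V_\infty$ is harmonic with homogeneous Neumann data and equals $\xi_R$ on $\{\zeta_1<0\}$.

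Since $\Omega_\infty$ is connected and $V_\infty=\xi_R$ on the open subset $\{\zeta_1<0\}$, unique continuation for harmonic functions forces $V_\infty\equiv\xi_R$ on $\Omega_\infty$; in particular $V_\infty|_{\Gamma^\alpha}=\xi_R$. The weak $H^1_{\mathrm{loc}}$-convergence of $V_\eps$ yields weak $L^2$-convergence of traces on the fixed disk $\Gamma^\alpha$, and pairing with the constant $|\Gamma^\alpha|^{-1}\mathbf{1}_{\Gamma^\alpha}$ gives $\mean_{\Gamma^\alpha}V_\eps\to\xi_R$. Undoing the rescaling (averages are scale-invariant) is exactly the first convergence in \eqref{eq:result-lemma}. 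The main obstacle lies in the blow-up step: a direct rescaling of the global $L^2$-bound does not by itself give $L^2$-boundedness of $V_\eps$ on arbitrary compact subsets of $\Omega_\infty$, so an $L^\infty$-type estimate on $U_\eps$ in a neighborhood of $(y_R,0)$ is required. Such an estimate follows from a De Giorgi--Moser iteration for the Helmholtz equation at the flat Neumann portion of $\partial R_Y$, with constants uniform in $\eps$. The corner geometry of $\Omega_\infty$ at the rim $\{\zeta_1=0,\,|\zeta'|=\alpha\}$ is a secondary technical issue, but only the $H^1$-regularity of $V_\infty$ across the opening is actually needed for unique continuation.
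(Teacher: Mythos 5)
Your overall strategy (Caccioppoli estimate, blow-up at scale $\eps^p$ around the channel mouth, identification of the blow-up limit, trace convergence on the fixed rescaled disk $\Gamma^\alpha$) is a legitimate architecture and differs from the paper's, but it contains a genuine gap, and the gap sits exactly where the difficulty of the lemma lies: $\eps$-uniform pointwise control of $U_\eps$ at the degenerating channel mouth. Two concrete problems. First, the identification $V_\infty=\xi_R$ on $\{\zeta_1<0\}$ does not follow from the strong convergence $U_\eps\to\xi_R$ in $L^2(B_{\delta/2}\cap R_Y)$ that Rellich provides: after rescaling, what is needed is $\eps^{-np}\int_{B_{R\eps^p}((y_R,0))}|U_\eps-\xi_R|^2\to 0$, i.e.\ a quantitative (essentially uniform) convergence at scale $\eps^p$ around the boundary point $(y_R,0)$, not $L^2$-smallness on a fixed ball. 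The same scaling issue affects the uniform $L^2_{\loc}$-bound of $V_\eps$ (which you acknowledge) and, for $n=3$, also the $H^1_{\loc}$-bound, since $\int_{B_R}|\nabla V_\eps|^2=\eps^{(2-n)p}\int_{B_{R\eps^p}}|\nabla U_\eps|^2$. Second, the fix you invoke --- De Giorgi--Moser ``with constants uniform in $\eps$'' --- is not available off the shelf: the iteration needs a Sobolev/extension inequality with $\eps$-uniform constant on $B_r((y_R,0))\cap(R_Y\cup K_Y^\eps)$, and the attached thin cylinder destroys this uniformity (for $u\equiv 1$ on the channel the relevant Sobolev ratio degenerates like a negative power of $|K_Y^\eps|$). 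Running the iteration on the half-ball alone instead forces you to control the inhomogeneous Neumann datum $\del_1 U_\eps$ on $\Gamma_Y^{\eps,R}$, which is circular.

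The paper closes precisely this hole by a different device: since $\del R_Y$ is flat near $(y_R,0)$ and the channel walls are parallel to $e_1$, the derivative $\del_1 U_\eps$ solves a Helmholtz equation with homogeneous Dirichlet data on the flat boundary part and homogeneous Neumann data on the channel walls; this yields $\eps$-uniform local $H^2$-bounds for $U_\eps$ near the mouth (with a slicing argument in $n=3$), and $H^2\hookrightarrow C^0$ for $n\le 3$ gives locally uniform convergence $U_\eps\to\xi_R$, from which the convergence of averages over the shrinking interfaces in \eqref{eq:result-lemma} is immediate. If you wish to keep the blow-up architecture, you must supply an estimate of comparable strength (uniform continuity of $U_\eps$ up to the point $(y_R,0)$); once you have it, your remaining steps do go through --- note that you do not even need unique continuation, since the trace of $V_\infty$ on $\Gamma^\alpha$ from the half-space side is already $\xi_R$ --- but at that point the blow-up itself becomes superfluous.
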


\begin{remark}
  It is easy to obtain local $H^1$-estimates for the solution sequence
  $U_\eps$ (as we will show in Step 1 of the proof). The assertion of
  the lemma is interesting since the channels degenerate in the limit
  $\eps\to 0$.  Indeed, let us imagine the channels had a constant
  cross-section which does not degenerate in the limit $\eps\to 0$. In
  that case, the trace theorem provided that the traces of $U_\eps$
  along $\Gamma_R^\eps$ converge to the trace of the limit function,
  i.e. the trace of the constant function $\xi_R$, which is
  $\xi_R$. The average of that function over $\Gamma_Y^{\eps,R}$ is
  $\xi_R$, hence \eqref {eq:result-lemma} follows.
\end{remark}

\begin{figure}[th]
   \centering
   \includegraphics[height=55mm]{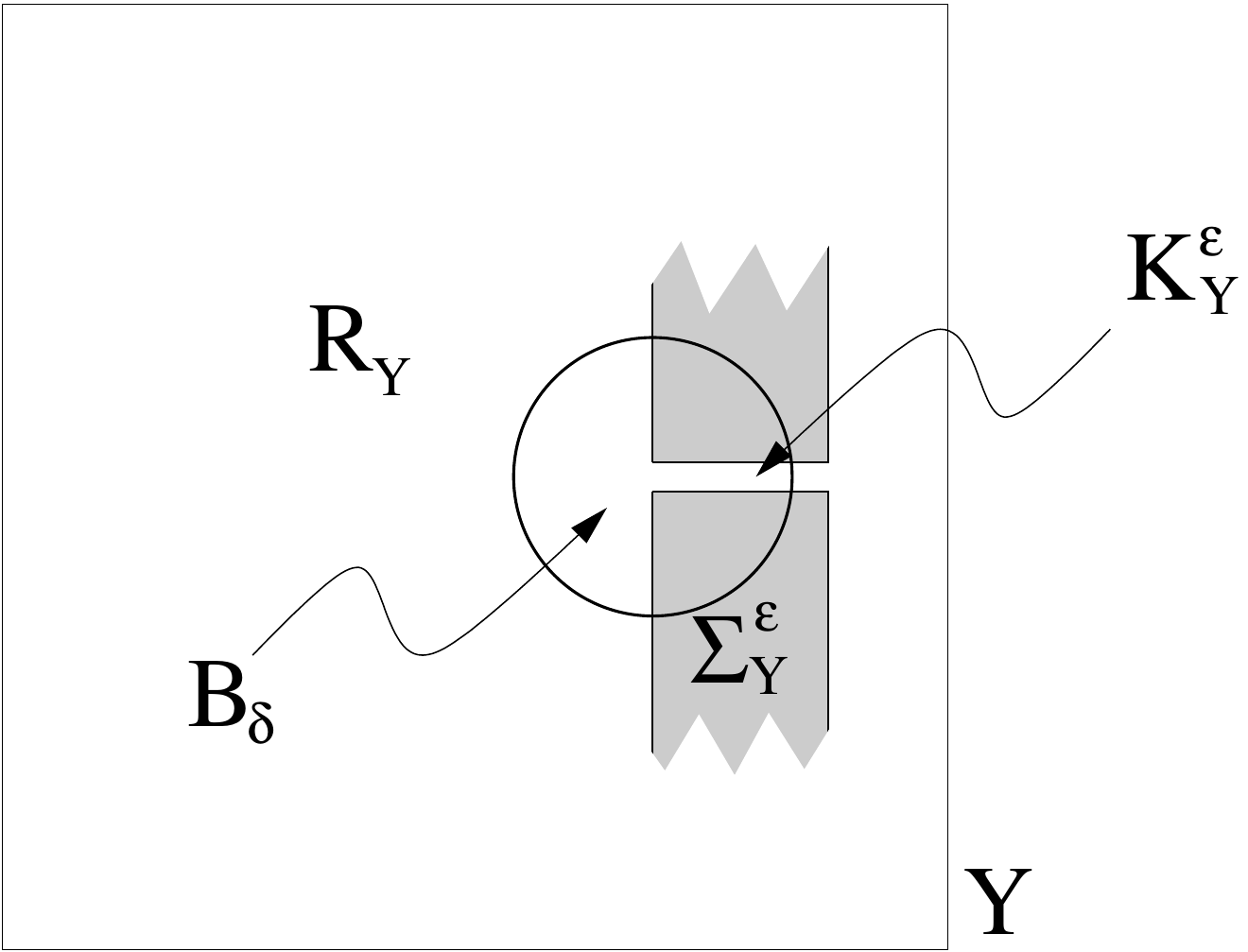}
   \caption{\em Sketch of the  geometry around the end-point of the channel.
     \label{fig:Lemma}}
\end{figure}

The subsequent proof is a sketch in so far as technical details in
Step 2 are omitted.

\begin{proof}[Sketch of proof]
  {\em Step 1. Local $H^1$ estimate.}  We choose a cut-off function
  $\theta\in C_c^\infty(Y)$ which is identical to $1$ in $R_Y\cup
  \Sigma_Y^\eps \cup K_Y^\eps$ and use $\theta^2 U_\eps$ as a test
  function in equation \eqref {eq:ass-lemma-1} to obtain
  \begin{align*}
    \int_{Y\setminus \Sigma_Y^\eps} |\nabla U_\eps|^2 \theta^2 = -
    2\int_{Y\setminus \Sigma_Y^\eps} \theta \nabla U_\eps\cdot\nabla
    \theta\, U_\eps + \omega^2 \eps^2 \int_{Y\setminus \Sigma_Y^\eps}
    |U_\eps|^2 \theta^2\,.
  \end{align*}
  With Youngs inequality we conclude from the $L^2$-boundedness of
  $U_\eps$ the $L^2$-boundedness of $\theta \nabla U_\eps$.

  \medskip {\em Step 2.  Estimates for $\del_1 U_\eps$.}  We now want
  to obtain estimates for higher derivatives of the solution
  sequence. We recall that the lateral boundaries of the channel are
  straight and aligned with $e_1$. Furthermore, in a neighborhood of
  $\Gamma_Y^{\eps,R}$, the boundary $\del R_Y$ was assumed to be
  contained in a hypersurface with normal $e_1$.

  We examine the solution in the ball $B_\delta := B_\delta((y_R,0))$,
  where $\delta>0$ is chosen so small that the boundary
  $\del\Sigma_Y^\eps \cap B_\delta$ consists of two pieces, a subset
  of the cylinder $\del K_Y^\eps$ and a subset of the plane $\{y_R\}
  \times \R^{n-1}$.

  We study the function $V_\eps := \del_1 U_\eps$ in $B_\delta$. The
  function $V_\eps$ solves the Helmholtz $-\Delta V_\eps = \omega^2
  \eps^2 V_\eps$ (in the distributional sense) in $B_\delta\setminus
  \Sigma_Y^\eps$. On the plane part of the boundary,
  $\del\Sigma_Y^\eps\cap \left(\{y_R\} \times \R^{n-1}\right) \cap
  B_\delta$, it satisfies the Dirichlet condition $V_\eps = 0$ (since
  $U_\eps$ satisfies the homogeneous Neumann condition). On the
  cylindrical part of the boundary, $\del K_Y^\eps\cap
  \del\Sigma_Y^\eps\cap B_\delta$, it satisfies the homogeneous
  Neumann condition ($\del_\nu V_\eps = \del_1 \del_\nu U_\eps = 0$).

  The homogeneous boundary conditions allow to derive an estimate for
  $V_\eps$: Multiplication of the Helmholtz equation for $V_\eps$ with
  the solution $V_\eps$ (multiplied with a cut-off function) provides
  local $H^1$-estimates for $V_\eps$.

  The above is not a rigorous proof: A priori, the function $V_\eps$
  is only of class $L^2$ (as a derivative of $U_\eps$), hence testing
  the equation with $V_\eps$ is not allowed. To obtain a rigorous
  proof, one has to proceed as follows: (a) Localize the problem and
  formulate boundary value problems for $U_\eps$ and $V_\eps$ in
  $B_\delta$ (or, better, for their truncated counterparts). (b) Show
  with the help of the Lax-Milgram lemma that the $V_\eps$ problem
  posesses a solution $V_\eps$ of class $H^1$. (c) Prove that the
  $y_1$-integrated solution solves the $U_\eps$-problem and conclude
  from the uniqueness of the $U_\eps$-problem that the $H^1$-function
  $V_\eps$ coincides with $\del_1 U_\eps$. We omit these technical
  details.

  \medskip {\em Step 3. Full high order estimate.}  In two space
  dimensions, we have obtained local $H^2$ estimates at this point:
  The second derivatives in the second direction can be expressed as
  $\del_2^2 U_\eps = -\del_1^2 U_\eps - \eps^2 \omega^2 U_\eps\in
  L^2$. We conclude that all second derivatives are bounded, $\|
  \Theta D^2 U_\eps \|_{L^2} \le C$ for a cut-off function $\Theta$.

  In space dimension $n=3$ we consider, for fixed $\zeta\in \R$,
  slices $S_{\zeta} := \{ y = (y_1,y_2,y_3) |\, y_1 = \zeta\}$. In every
  slice $S_{\zeta}$ the function $U_\eps|_{S_{\zeta}}$ solves the
  two-dimensional problem $\Delta_{2} U_\eps := (\del_2^2 +
  \del_3^2) U_\eps = -\del_1^2 U_\eps - \eps^2 \omega^2 U_\eps\in
  L^2(S_{\zeta})$ for almost every $\zeta$. This provides estimates
  for all second spatial derivatives in $L^2$, locally around the
  interface.

  \medskip {\em Step 4. Sobolev embedding and conclusion.} From the
  compact embedding $H^2(R_Y) \subset C^0(R_Y)$ we conclude that, up
  to a subsequence, the solution sequence $U_\eps|_{R_Y}$ converges
  not only weakly in $H^2(R_Y)$ to $\xi_R$, but also strongly in
  $C^0(R_Y)$. This implies the first claim of \eqref
  {eq:result-lemma}. The second claim is shown by exactly the same
  calculation for $\Gamma_Y^{\eps,Q}$.
\end{proof}

\vspace*{-2mm}
\subsection*{Acknowledgements}

Support of both authors by DFG grant Schw 639/6-1 is greatfully
acknowledged.

\bibliographystyle{abbrv} 
\bibliography{lit-manyhelmholtz-5}

\def\cprime{$'$}
\begin{thebibliography}{10}

\bibitem{Allaire1992}
G.~Allaire.
\newblock Homogenization and two-scale convergence.
\newblock {\em SIAM J. Math. Anal.}, 23(6):1482--1518, 1992.

\bibitem{AllaireBriane}
G.~Allaire and M.~Briane.
\newblock Multiscale convergence and reiterated homogenisation.
\newblock {\em Proc. Roy. Soc. Edinburgh Sect. A}, 126(2):297--342, 1996.

\bibitem{Allaire-Conca-1998}
G.~Allaire and C.~Conca.
\newblock Bloch wave homogenization and spectral asymptotic analysis.
\newblock {\em J. Math. Pures Appl. (9)}, 77(2):153--208, 1998.

\bibitem{BellieudBouchitte1998}
M.~Bellieud and G.~Bouchitt{\'e}.
\newblock Homogenization of elliptic problems in a fiber reinforced structure.
  {N}onlocal effects.
\newblock {\em Ann. Scuola Norm. Sup. Pisa Cl. Sci. (4)}, 26(3):407--436, 1998.

\bibitem{BellieudGruias2005}
M.~Bellieud and I.~Gruais.
\newblock Homogenization of an elastic material reinforced by very stiff or
  heavy fibers. {N}on-local effects. {M}emory effects.
\newblock {\em J. Math. Pures Appl. (9)}, 84(1):55--96, 2005.

\bibitem{BellieudBouchitte2002}
G.~Bouchitt{\'e} and M.~Bellieud.
\newblock Homogenization of a soft elastic material reinforced by fibers.
\newblock {\em Asymptot. Anal.}, 32(2):153--183, 2002.

\bibitem{BouchitteBourel2009}
G.~Bouchitt{\'e}, C.~Bourel, and D.~Felbacq.
\newblock Homogenization of the 3{D} {M}axwell system near resonances and
  artificial magnetism.
\newblock {\em C. R. Math. Acad. Sci. Paris}, 347(9-10):571--576, 2009.

\bibitem{BouchitteFelbacq-2000}
G.~Bouchitt{\'e} and D.~Felbacq.
\newblock Low frequency scattering by a set of parallel metallic rods.
\newblock In {\em Mathematical and numerical aspects of wave propagation
  ({S}antiago de {C}ompostela, 2000)}, pages 226--230. SIAM, Philadelphia, PA,
  2000.

\bibitem{BouchitteFelbacq2004}
G.~Bouchitt{\'e} and D.~Felbacq.
\newblock Homogenization near resonances and artificial magnetism from
  dielectrics.
\newblock {\em C. R. Math. Acad. Sci. Paris}, 339(5):377--382, 2004.

\bibitem{BouchitteFelbacq2006}
G.~Bouchitt{\'e} and D.~Felbacq.
\newblock Homogenization of a wire photonic crystal: the case of small volume
  fraction.
\newblock {\em SIAM J. Appl. Math.}, 66(6):2061--2084, 2006.

\bibitem{BouchitteSchweizer-Max}
G.~Bouchitt{\'e} and B.~Schweizer.
\newblock Homogenization of {M}axwell's equations in a split ring geometry.
\newblock {\em Multiscale Model. Simul.}, 8(3):717--750, 2010.

\bibitem{BouchitteSchweizer-Plasmons}
G.~Bouchitt{\'e} and B.~Schweizer.
\newblock Plasmonic waves allow perfect transmission through sub-wavelength
  metallic gratings.
\newblock {\em Netw. Heterog. Media}, 8(4):857--878, 2013.

\bibitem{ChenLipton2010}
Y.~Chen and R.~Lipton.
\newblock Tunable double negative band structure from non-magnetic coated rods.
\newblock {\em New Journal of Physics}, 12(8):083010, 2010.

\bibitem{CherednichenkoSZ}
K.~D. Cherednichenko, V.~P. Smyshlyaev, and V.~V. Zhikov.
\newblock Non-local homogenized limits for composite media with highly
  anisotropic periodic fibres.
\newblock {\em Proc. Roy. Soc. Edinburgh Sect. A}, 136(1):87--114, 2006.

\bibitem{CodegoneMR2126242}
V.~Chiad{\`o}~Piat and M.~Codegone.
\newblock Scattering problems in a domain with small holes.
\newblock {\em RACSAM Rev. R. Acad. Cienc. Exactas F\'\i s. Nat. Ser. A Mat.},
  97(3):447--454, 2003.

\bibitem{CioranescuMurat-strange}
D.~Cioranescu and F.~Murat.
\newblock A strange term coming from nowhere.
\newblock In {\em Topics in the mathematical modelling of composite materials},
  volume~31 of {\em Progr. Nonlinear Differential Equations Appl.}, pages
  45--93. Birkh\"auser Boston, Boston, MA, 1997.

\bibitem{CioranescuPaulin1979}
D.~Cioranescu and J.~Saint Jean~Paulin.
\newblock Homogenization in open sets with holes.
\newblock {\em J. Math. Anal. Appl.}, 71(2):590--607, 1979.

\bibitem{Cioranescu1999}
D.~Cioranescu and J.~Saint Jean~Paulin.
\newblock {\em Homogenization of reticulated structures}, volume 136 of {\em
  Applied Mathematical Sciences}.
\newblock Springer-Verlag, New York, 1999.

\bibitem{Doerlemann-Heida-Schweizer}
C.~D\"orlemann, M.~Heida, and B.~Schweizer.
\newblock Transmission conditions for the {H}elmholtz equation in perforated
  domains.
\newblock Preprint 2015-02 of the TU Dortmund, 2015.
\newblock http://hdl.handle.net/2003/34074.

\bibitem{GuenneauZollaNicolet}
S.~Guenneau, F.~Zolla, and A.~Nicolet.
\newblock Homogenization of 3{D} finite photonic crystals with heterogeneous
  permittivity and permeability.
\newblock {\em Waves Random Complex Media}, 17(4):653--697, 2007.

\bibitem{ZhikovMR1329546}
V.~V. Jikov, S.~M. Kozlov, and O.~A. Ole{\u\i}nik.
\newblock {\em Homogenization of differential operators and integral
  functionals}.
\newblock Springer-Verlag, Berlin, 1994.
\newblock Translated from the Russian by G. A. Yosifian [G. A. Iosifyan].

\bibitem{KohnShipman}
R.~Kohn and S.~Shipman.
\newblock Magnetism and homogenization of micro-resonators.
\newblock {\em Multiscale Modeling \& Simulation}, 7(1):62--92, 2007.

\bibitem{Kohn-LSW}
R.~V. Kohn, J.~Lu, B.~Schweizer, and M.~I. Weinstein.
\newblock A variational perspective on cloaking by anomalous localized
  resonance.
\newblock {\em Comm. Math. Phys.}, 328(1):1--27, 2014.

\bibitem{Lamacz-Schweizer-Max}
A.~Lamacz and B.~Schweizer.
\newblock Effective {M}axwell equations in a geometry with flat rings of
  arbitrary shape.
\newblock {\em SIAM J. Math. Anal.}, 45(3):1460--1494, 2013.

\bibitem{Lamacz-Schweizer-Neg}
A.~Lamacz and B.~Schweizer.
\newblock A negative index meta-material for {M}axwell´s equations.
\newblock Preprint 2015-06 of the TU Dortmund, 2015.
\newblock http://hdl.handle.net/2003/34176.

\bibitem{Nguetseng1989}
G.~Nguetseng.
\newblock A general convergence result for a functional related to the theory
  of homogenization.
\newblock {\em SIAM J. Math. Anal.}, 20(3):608--623, 1989.

\bibitem{BrienPendry2002}
S.~O'Brien and J.~Pendry.
\newblock Magnetic activity at infrared frequencies in structured metallic
  photonic crystals.
\newblock {\em J. Phys. Condens. Mat.}, 14:6383 -- 6394, 2002.

\bibitem{Pendry2000}
J.~Pendry.
\newblock Negative refraction makes a perfect lens.
\newblock {\em Phys. Rev. Lett.}, 85(3966), 2000.

\bibitem{Sanchez-Palencia}
E.~S{\'a}nchez-Palencia.
\newblock {\em Nonhomogeneous media and vibration theory}, volume 127 of {\em
  Lecture Notes in Physics}.
\newblock Springer-Verlag, Berlin, 1980.

\bibitem{Schweizer-HelmRes}
B.~Schweizer.
\newblock The low-frequency spectrum of small {H}elmholtz resonators.
\newblock {\em Proc. A.}, 471(2174):20140339, 18, 2015.

\bibitem{Veselago1968}
V.~Veselago.
\newblock The electrodynamics of substances with simultaneously negative values
  of $\varepsilon$ and $\mu$.
\newblock {\em Soviet Physics Uspekhi}, 10:509--514, 1968.

\bibitem{Zhikov-spectral}
V.~V. Zhikov.
\newblock Two-scale convergence and spectral problems of homogenization.
\newblock {\em Tr. Semin. im. I. G. Petrovskogo}, (22):105--120, 333, 2002.

\end{thebibliography}

\end{document}